\documentclass[a4paper,11pt]{article}
\textwidth380pt
\hoffset-40pt
\voffset+0pt
\headsep-20pt
\textheight510pt

\usepackage{amsmath, amsfonts, amscd, amssymb, amsthm, enumerate, xypic}

\def\defterm{\emph}

\newcommand{\Mat}{\operatorname{M}}

\newcommand{\NT}{\operatorname{NT}}
\newcommand{\LT}{\operatorname{LT}}
\newcommand{\GL}{\operatorname{GL}}
\newcommand{\Ker}{\operatorname{Ker}}
\newcommand{\Vect}{\operatorname{span}}

\newcommand{\tr}{\operatorname{tr}}
\renewcommand{\setminus}{\smallsetminus}


\def\F{\mathbb{F}}
\def\K{\mathbb{K}}


\def\calU{\mathcal{U}}
\def\calV{\mathcal{V}}
\def\calW{\mathcal{W}}


\def\lcro{\mathopen{[\![}}
\def\rcro{\mathclose{]\!]}}

\theoremstyle{definition}
\newtheorem{Def}{Definition}

\theoremstyle{plain}
\newtheorem{theo}{Theorem}

\newtheorem{cor}[theo]{Corollary}
\newtheorem{lemme}[theo]{Lemma}
\newtheorem{claim}{Claim}

\theoremstyle{plain}

\theoremstyle{remark}
\newtheorem{Rems}{Remarks}
\newtheorem{Rem}[Rems]{Remark}

\title{On Gerstenhaber's theorem for spaces of nilpotent matrices over a skew field}
\author{Cl\'ement de Seguins Pazzis\footnote{Universit\'e de Versailles Saint-Quentin-en-Yvelines, Laboratoire de Math\'ematiques
de Versailles, 45 avenue des Etats-Unis, 78035 Versailles cedex, France}
\footnote{e-mail address: dsp.prof@gmail.com}}

\begin{document}

\thispagestyle{plain}

\maketitle

\begin{abstract}
Let $\K$ be a skew field, and $\K_0$ be a subfield of the central subfield of $\K$
such that $\K$ has finite dimension $q$ over $\K_0$.
Let $\calV$ be a $\K_0$-linear subspace of $n \times n$ nilpotent matrices with entries in $\K$.
We show that the dimension of $\calV$ is bounded above by $q\,\dbinom{n}{2}$, and that equality occurs if and only if
$\calV$ is similar to the space of all $n \times n$ strictly upper-triangular matrices over $\K$.
This generalizes famous theorems of Gerstenhaber and Serezhkin, which cover the special case $\K=\K_0$.
\end{abstract}

\vskip 2mm
\noindent
\emph{AMS MSC:} 15A03, 15A30

\vskip 2mm
\noindent
\emph{Keywords:} nilpotent matrices, Gerstenhaber theorem, skew fields.

\section{Introduction}

In this article, we let $\K$ be an arbitrary skew field, and $\K_0$ be a subfield of the central subfield of $\K$
over which $\K$ has finite dimension $q$. The set $\K^n$ is always endowed with its canonical structure of right-$\K$-vector space.
We denote by $\Mat_{n,p}(\K)$ the set of all $n \times p$ matrices with entries in $\K$, endowed with its canonical structure of vector space
over $\K_0$. We set $\Mat_n(\K):=\Mat_{n,n}(\K)$, and denote by $\GL_n(\K)$ its group of invertible elements. We denote
by $\NT_n(\K)$ the set of all strictly upper-triangular matrices of $\Mat_n(\K)$.

The transpose of a matrix $M$ is denoted by $M^T$, and its trace by $\tr(M)$.
The relation of similarity between matrices is denoted by $\simeq$ and is naturally extended to subsets of $\Mat_n(\K)$.

\vskip 3mm
A linear subspace $\calV$ of $\Mat_n(\K)$ (over $\K_0$) is called \defterm{nilpotent} when all its elements are nilpotent matrices.
In that case, we note that, for every $P \in \GL_n(\K)$, the set $P\calV P^{-1}$ is a nilpotent linear subspace of $\Mat_n(\K)$
with the same dimension as $\calV$.

In his first entry in a series of four landmark papers \cite{Ger1}, Murray Gerstenhaber
studied the structure of such nilpotent subspaces. Here is his most famous result:

\begin{theo}[Gerstenhaber, Serezhkin]\label{Gerstenhabertheorem}
Assume that $\K$ is commutative, and let $\calV$ be a nilpotent linear subspace of the $\K$-vector space $\Mat_n(\K)$. Then
$\dim_\K \calV \leq \dbinom{n}{2}$, and equality occurs if and only if $\calV$ is similar to $\NT_n(\K)$.
\end{theo}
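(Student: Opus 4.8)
The plan is to prove the dimension bound $\dim_\K \calV \le \binom{n}{2}$ by induction on $n$, and then to recover the classification of the equality case by inspecting when the inductive estimates are tight.

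For the bound, I would induct on $n$, the case $n=1$ being trivial since the only nilpotent $1\times 1$ matrix is $0$. For the step, let $\calV\subseteq\Mat_n(\K)$ be nilpotent with $n\ge 2$. The favourable case is when $\calV$ has a nonzero common kernel vector $v$: after conjugating so that $v$ is the first basis vector, every $M\in\calV$ has zero first column, hence $M=\begin{pmatrix}0 & L(M)\\ 0 & \rho(M)\end{pmatrix}$ with $\rho(M)\in\Mat_{n-1}(\K)$, and $M$ is nilpotent if and only if $\rho(M)$ is. Thus $\rho$ is a linear map from $\calV$ onto a nilpotent subspace $\rho(\calV)\subseteq\Mat_{n-1}(\K)$, its kernel consists of matrices all of whose entries outside the first row vanish and so has dimension at most $n-1$, and the inductive hypothesis yields $\dim_\K\calV\le (n-1)+\binom{n-1}{2}=\binom{n}{2}$. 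Running the same argument on $\calV^T$ settles the case $\sum_{M\in\calV}\im M\ne\K^n$. (One may assume $\K$ infinite here; the finite-field instance is Serezhkin's and calls for a separate, more delicate treatment, which I omit.)

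The remaining case — when $\bigcap_{M\in\calV}\Ker M=\{0\}$ and $\sum_{M\in\calV}\im M=\K^n$ — is the crux, and such spaces do occur, for otherwise every nilpotent space would be triangularizable and the theorem would be immediate. Here I would pick $N\in\calV$ of maximal rank $r$, so $1\le r\le n-1$, and appeal to the rank lemma: since $\operatorname{rank}(N+tM)\le r$ for every $M\in\calV$ and every scalar $t$, one gets $M(\Ker N)\subseteq\im N$ for all $M\in\calV$. Hence $\calV$ is contained in $\calU:=\{M\in\Mat_n(\K): M(\Ker N)\subseteq\im N\}$, a subspace of dimension $n^2-(n-r)^2$. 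As this inclusion by itself is too weak, I would reinject nilpotency by restricting the elements of $\calV$ to $\im N$ and by passing to the quotient $\K^n/\Ker N$, obtaining smaller nilpotent spaces to which the inductive hypothesis applies, and then combine the resulting bounds to reach $\binom{n}{2}$. I expect this to be the principal obstacle: the bookkeeping in this non-degenerate case is delicate, chiefly because $\Ker N$ and $\im N$ need not be complementary — already $N=E_{12}$ has $\Ker N=\im N$ when $n=2$ — so one must keep track of the intersection $\Ker N\cap\im N$ and the sum $\Ker N+\im N$ separately rather than working with a direct-sum decomposition.

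For the equality case, I would trace the induction. Suppose $\dim_\K\calV=\binom{n}{2}$. If $\calV$ has a nonzero common kernel vector, both inequalities in the reduction above become equalities, so $\calV$ contains every matrix supported on the first row and $\rho(\calV)\simeq\NT_{n-1}(\K)$ by the inductive hypothesis; assembling these, after conjugating the last $n-1$ coordinates, identifies $\calV$ with $\NT_n(\K)$, and the transposed argument handles the case $\sum_{M\in\calV}\im M\ne\K^n$. It then remains to exclude equality in the hard case — equivalently, to show that a nilpotent space of maximal dimension always possesses a common eigenvector — which I would extract from the refined estimate of the previous paragraph, arranged so that it is strict unless $\Ker N$ degenerates into a common kernel of $\calV$. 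Together these yield both the bound and the classification.
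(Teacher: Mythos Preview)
Your plan splits into an easy case (common kernel vector) and a hard case, and you correctly flag the latter as the crux --- but the strategy you sketch there does not close. Restricting $M\in\calV$ to $\im N$ gives a map $\im N\to\K^n$, not an endomorphism; passing to the quotient $\K^n/\Ker N$ would require $M(\Ker N)\subseteq\Ker N$, which you do not have (the rank constraint gives only $M(\Ker N)\subseteq\im N$). So neither construction produces a nilpotent subspace of a smaller matrix algebra on which to invoke the inductive hypothesis, and the crude containment $\calV\subset\calU$ with $\dim\calU=n^2-(n-r)^2$ is far too weak on its own (for $r=n-1$ it gives $n^2-1$). Your reduction to infinite $\K$ is acceptable for the inequality via scalar extension, but for the equality case descent back to $\K$ is not automatic, and you offer no argument for why equality cannot occur in your hard case beyond the hope that the refined estimate will turn out to be strict.

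The paper sidesteps this dichotomy entirely. Its key lemma (Lemma~\ref{basiclemma}) shows that any additive set of nilpotent $n\times n$ matrices admits a \emph{$\calV$-adapted} canonical basis vector $e_i$: one such that no matrix of $\calV$ has column space equal to $e_i\K$. This is strictly weaker than being a common kernel vector --- it constrains only the rank-one elements of $\calV$ --- yet it is exactly what the induction needs, and it holds over every field, finite or skew, with no hard case and no scalar extension. For the equality statement the paper then does far more than retrace the induction: after applying the inductive hypothesis to both an upper-left and a lower-right $(n-1)\times(n-1)$ corner, it performs a sequence of further base changes and a detailed analysis of special matrices $A_L$, $B_C$, $E_U$, $J_a$ in $\calV$ (the ``diagonal-compatibility method'') to force $\calV=\NT_n(\K)$; the case $n=3$, $\#\K=2$ even requires a separate determinant computation. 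Your one-paragraph plan for equality does not engage with any of this.
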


Our main aim here is to prove the following generalization to skew fields:

\begin{theo}\label{Gerstenhaberskew}
Let $\calV$ be a nilpotent linear subspace of $\Mat_n(\K)$ (over $\K_0$). Then:
\begin{enumerate}[(a)]
\item $\dim_{\K_0} \calV \leq q\,\dbinom{n}{2}$.
\item If $\dim_{\K_0} \calV = q\,\dbinom{n}{2}$, then $\calV$ is similar to $\NT_n(\K)$.
\end{enumerate}
\end{theo}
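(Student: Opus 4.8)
The plan is to prove (a) and (b) together by induction on $n$, the cases $n\le1$ being immediate. Fix $n\ge2$ and a nilpotent $\K_0$-subspace $\calV$ of $\Mat_n(\K)$, and distinguish two cases according to whether $\calV$ is \emph{reducible}, i.e.\ whether some proper nonzero $\K$-subspace $W$ of $\K^n$ satisfies $\calV W\subseteq W$.

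Suppose first that such a $W$ exists, with $k:=\dim_\K W\in\{1,\dots,n-1\}$. Extending a $\K$-basis of $W$ to one of $\K^n$ conjugates $\calV$ into a space of block-upper-triangular matrices; the induced spaces $\calV_1\subseteq\Mat_k(\K)$ on $W$ and $\calV_2\subseteq\Mat_{n-k}(\K)$ on $\K^n/W$ are again nilpotent, so the induction hypothesis gives $\dim_{\K_0}\calV_1\le q\binom k2$ and $\dim_{\K_0}\calV_2\le q\binom{n-k}2$. Applying rank--nullity first to $M\mapsto M|_W$ on $\calV$ and then, on the subspace of matrices killing $W$, to the induced operator on $\K^n/W$, one obtains
\[
\dim_{\K_0}\calV\le q\binom k2+q\,k(n-k)+q\binom{n-k}2=q\binom n2 ,
\]
using the identity $\binom k2+k(n-k)+\binom{n-k}2=\binom n2$. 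If equality holds here, every inequality above is an equality: $\calV_1$ and $\calV_2$ are extremal, hence by the induction hypothesis $\calV_1\simeq\NT_k(\K)$ and $\calV_2\simeq\NT_{n-k}(\K)$; the off-diagonal block of $\calV$ is \emph{all} of $\Mat_{k,n-k}(\K)$; and the two diagonal blocks vary independently. Conjugating by a block-diagonal $\mathrm{diag}(P_1,P_2)\in\GL_n(\K)$ then turns $\calV$ into the space of block-upper-triangular matrices with diagonal blocks in $\NT_k(\K)$ and $\NT_{n-k}(\K)$ and arbitrary off-diagonal block, that is, into $\NT_n(\K)$. The only point requiring attention is that $\calV$ is merely $\K_0$-linear while the conjugations above are performed over $\K$; this is harmless, since $\GL_n(\K)$ acts on nilpotent $\K_0$-subspaces.

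The heart of the matter is the \emph{irreducible} case, where no such $W$ exists and the induction on $n$ gives no leverage: one must show that an irreducible nilpotent $\K_0$-subspace of $\Mat_n(\K)$ has $\K_0$-dimension \emph{strictly} less than $q\binom n2$, so that equality in (a) can only occur in the reducible case --- which is exactly what makes the induction for (b) close. I expect this to be the main obstacle. In characteristic $0$ the claim is vacuous: a classical argument (going back, in the commutative case, to Gerstenhaber's original paper, and using the reduced norm of $\Mat_n(\K)$ over the centre $Z$ of $\K$ --- available because $[\K:\K_0]<\infty$ makes $\K$ finite over $Z$ --- together with Wedderburn's theorem on nil algebras) shows that every nilpotent space is triangularisable, so no irreducible example exists. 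In positive characteristic this fails, and irreducible nilpotent spaces occur already in $\Mat_3(\F_2)$; the required bound must then be extracted from the Jacobson density theorem, which presents the $\K_0$-subalgebra generated by $\calV$ as $\mathrm{End}_\D(\K^n)\cong\Mat_m(\D)$ for a division ring $\D$ containing $\K$, together with a careful analysis of how a nilpotent space generating such a simple algebra is constrained --- typically by singling out an element of minimal nonzero rank and controlling the $\K$-subspaces annihilated by it and by its neighbours. The characteristic-$2$ and noncommutativity subtleties (the failure of polarisation, and the need to distinguish symmetric from alternating auxiliary bilinear forms) are concentrated in this final step.
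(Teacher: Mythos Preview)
Your reducible case is essentially correct: once a nonzero proper invariant $\K$-subspace exists, block-triangularising and applying the induction hypothesis to the diagonal blocks does yield both the inequality and, in the equality case, the similarity with $\NT_n(\K)$ (the dimension count forces $\calV$ to equal the space of all strictly upper-triangular matrices in that block decomposition).

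The gap is the irreducible case, and it is not a minor one: you have not proved anything there, only sketched a programme. The sentence ``the required bound must then be extracted from the Jacobson density theorem \ldots\ together with a careful analysis'' is a description of difficulty, not an argument. In particular, Jacobson density tells you that the $\K_0$-subalgebra generated by $\calV$ is all of $\mathrm{End}_\D(\K^n)$ for some division ring $\D\supseteq\K$, but this places no evident constraint on $\dim_{\K_0}\calV$ itself; the passage from ``$\calV$ generates a simple algebra'' to ``$\dim_{\K_0}\calV<q\binom n2$'' is exactly the hard part, and you have not supplied it. Since the irreducible case is needed already to establish (a) --- not just to force reducibility in (b) --- your induction does not close for either statement.

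The paper avoids the reducible/irreducible dichotomy altogether. Its key lemma (Lemma~\ref{basiclemma}) shows, by a short self-contained combinatorial argument, that some canonical basis vector $e_i$ is \emph{$\calV$-adapted}: no matrix of $\calV$ has column space equal to $e_i\K$. This is much weaker than the existence of an invariant subspace, but it is exactly enough to make the rank--nullity induction for (a) go through uniformly, with no case distinction. For (b), the same lemma provides the starting point for an explicit ``diagonal-compatibility'' reduction: after arranging $\calV_{\text{ul}}=\NT_{n-1}(\K)$ by induction, one analyses the remaining freedom (the maps $\varphi,\psi,f,g,h$ and the $J_a$ matrices) and kills it by successive conjugations and nilpotency computations. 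The point is that the paper never needs to confront irreducibility, because the adapted-vector lemma works regardless; your approach, by contrast, stakes everything on a structure theorem for irreducible nilpotent spaces that you have not provided.
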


If $\K$ is finite (and therefore commutative), choosing $\K_0$ as its prime subfield yields the following corollary:

\begin{cor}\label{finitefieldscor}
Assume $\K$ is finite with cardinality $p$.
Let $\calV$ be a subgroup of $(\Mat_n(\K),+)$ in which every matrix is nilpotent.
Then $\# \calV \leq p^{\binom{n}{2}}$, and equality occurs only if $\calV$ is similar to $\NT_n(\K)$.
\end{cor}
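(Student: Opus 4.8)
The plan is to obtain Corollary \ref{finitefieldscor} as an immediate consequence of Theorem \ref{Gerstenhaberskew}, by taking $\K_0$ to be the prime subfield of $\K$. First I would recall that a finite skew field is commutative (Wedderburn's little theorem), so $\K$ is a finite field; let $\ell$ be its characteristic, so that the prime subfield $\K_0$ is isomorphic to $\F_\ell$, and set $q := \dim_{\K_0}\K$, whence $p = \#\K = \ell^q$. Since $\ell$ is prime, $\K_0$ is central in $\K$, so the hypotheses of Theorem \ref{Gerstenhaberskew} are met with this choice of $\K_0$.

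The key observation is that, because $\K$ has characteristic $\ell$, the additive group $(\Mat_n(\K),+)$ is an $\F_\ell$-vector space, and scalar multiplication by an element of $\F_\ell = \K_0$ coincides with iterated addition. Consequently every subgroup of $(\Mat_n(\K),+)$ is automatically a $\K_0$-linear subspace of $\Mat_n(\K)$. In particular the subgroup $\calV$ of the statement is a $\K_0$-linear subspace, and since all its elements are nilpotent, $\calV$ is a nilpotent linear subspace in the sense of Theorem \ref{Gerstenhaberskew}.

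Now I would simply apply Theorem \ref{Gerstenhaberskew}. Part (a) gives $\dim_{\K_0}\calV \leq q\dbinom{n}{2}$, and since $\calV$ is a finite-dimensional $\F_\ell$-vector space one has $\#\calV = \ell^{\dim_{\K_0}\calV} \leq \ell^{q\binom{n}{2}} = (\ell^q)^{\binom{n}{2}} = p^{\binom{n}{2}}$, which is the asserted bound. If equality holds, then $\dim_{\K_0}\calV = q\dbinom{n}{2}$, and part (b) yields $\calV \simeq \NT_n(\K)$, which is exactly the "equality only if" clause.

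There is essentially no obstacle here beyond bookkeeping, as all the substantive work is contained in Theorem \ref{Gerstenhaberskew}. The only point that deserves a word of justification is the remark that an additive subgroup of a vector space over a prime field is automatically a linear subspace, which is what allows us to replace the group-theoretic hypothesis on $\calV$ by the linear-algebraic one required by the theorem, at no cost.
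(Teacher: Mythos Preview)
Your proposal is correct and matches the paper's own approach: the paper simply remarks that the corollary follows from Theorem~\ref{Gerstenhaberskew} by choosing $\K_0$ to be the prime subfield of $\K$, and your write-up supplies exactly the routine verifications (Wedderburn, the fact that an additive subgroup is automatically $\K_0$-linear, and the cardinality count) needed to make that one-line deduction precise.
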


At the time of \cite{Ger1}, Gerstenhaber was actually able to prove Theorem \ref{Gerstenhabertheorem} only for fields with at least $n$ elements,
mostly because his methods relied on the use of polynomials. A lot of progress has been made
since then: we now have elementary and elegant proofs of the inequality statement that are valid for every field
\cite{Mathes,MacD}, and the case of equality has been obtained for an arbitrary field by V.N. Serezhkin
\cite{Serezhkin} (for fields with more than two elements, we now have a shorter proof based upon Jacobson's generalization of Engels's theorem, see
\cite{Mathes}).

Recent progress on the topic must be signaled here: in \cite{dSPlargerank}, the inequality statement of Theorem \ref{Gerstenhabertheorem}
has been extended to linear subspaces of $\Mat_n(\K)$ \defterm{with a trivial spectrum}, i.e.,
which consist solely of matrices with no non-zero eigenvalue in $\K$.
The study of such spaces is motivated by its connection with the affine subspaces of matrices with a rank bounded below by some fixed integer.
More recently \cite{dSPaffinenonsingular}, a classification of the
linear subspaces of $\Mat_n(\K)$ with a trivial spectrum and the maximal dimension $\binom{n}{2}$ has been discovered
for fields with more than two elements: for such fields, Theorem \ref{Gerstenhabertheorem} appears as an easy consequence of it
(see Section 5 of \cite{dSPaffinenonsingular}).
Finally, in \cite{dSPsoleeigenvalue}, we have been able to prove a theorem similar to Gerstenhaber's
for linear subspaces of matrices with exactly one eigenvalue in an algebraic closure of $\K$.

Both \cite{dSPaffinenonsingular} and \cite{dSPsoleeigenvalue} are based upon a new technique which we will call
the \emph{diagonal-compatibility method}. The purpose of this paper is to demonstrate how this strategy can be used
to obtain Theorem \ref{Gerstenhaberskew} with essentially no prior knowledge on the topic.
In particular, this will yield an alternative proof of Theorem \ref{Gerstenhabertheorem}
(in the course of the proof, we will point out to some shortcuts for the case $\K=\K_0$).
Note that in some cases (e.g., $\K$ is commutative and separable over $\K_0$), the line of reasoning of \cite{Mathes} may be adapted with some effort
by using the trace of $\K$ over $\K_0$; this however fails to yield our more general theorem, so we will not
use this strategy.

Our key lemma, which is proven in Section \ref{keylemma}, is a variation of Proposition 10 of \cite{dSPlargerank}.
It will help us prove both points in Theorem \ref{Gerstenhaberskew}: first, point (a) in Section \ref{inequalityproof}
and then point (b) in the longer Section \ref{equalityproof}.

For to simplify the case $\K=\K_0$, we recall the following classical result, which is proven in \cite{MacD,Mathes}.
We give a simple proof of it.

\begin{lemme}\label{orthogonality}
Assume that $\K$ is commutative, and let $A$ and $B$ be two nilpotent matrices of $\Mat_n(\K)$ such that $A+B$ is nilpotent. Then $\tr(AB)=0$.
\end{lemme}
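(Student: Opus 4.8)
The plan is to realize $\tr(AB)$ as (the negative of) the polarization of an integral quadratic invariant which vanishes on nilpotent matrices. For $M\in\Mat_n(\K)$ and an indeterminate $t$, write $\det(I_n+tM)=\sum_{k=0}^n c_k(M)\,t^k\in\K[t]$, so that $c_1(M)=\tr(M)$ and, more generally, $c_k(M)$ equals the sum of the $k\times k$ principal minors of $M$; equivalently, $(-1)^k c_k(M)$ is the coefficient of $t^{n-k}$ in the characteristic polynomial $\det(tI_n-M)$. First I would observe that a nilpotent matrix is conjugate to a strictly upper-triangular one, hence has characteristic polynomial $t^n$; therefore $c_k(M)=0$ for every $k\ge 1$ as soon as $M$ is nilpotent. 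In particular $c_2(M)=0$ for such $M$.

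The key point is the identity
$$c_2(A+B)=c_2(A)+c_2(B)+\tr(A)\,\tr(B)-\tr(AB),$$
valid for all $A,B\in\Mat_n(\K)$. This I would prove by a direct computation from the coordinate formula $c_2(M)=\sum_{i<j}\bigl(M_{ii}M_{jj}-M_{ij}M_{ji}\bigr)$: expanding $c_2(A+B)$ and collecting separately the terms involving only $A$, only $B$, and a mixture of both, the mixed contribution is $\sum_{i\ne j}A_{ii}B_{jj}-\sum_{i\ne j}A_{ij}B_{ji}$, and each of these two sums is turned into a full sum over all pairs by adding and subtracting its diagonal part $\sum_i A_{ii}B_{ii}$, which yields $\tr(A)\tr(B)-\tr(AB)$. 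It is essential to use this integral formula for $c_2$, rather than $c_2(M)=\tfrac12\bigl(\tr(M)^2-\tr(M^2)\bigr)$, so that the argument remains valid in characteristic $2$; and since $\K$ is commutative, there is no issue with the order of factors in these manipulations.

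It then remains only to feed the hypotheses into this identity: since $A$, $B$ and $A+B$ are nilpotent we have $c_2(A)=c_2(B)=c_2(A+B)=0$ and $\tr(A)=\tr(B)=0$, so the identity collapses to $0=-\tr(AB)$, as desired.

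As for the difficulty: there is essentially no obstacle, and the proof is indeed short; the one thing worth stressing is why the one-line computation $2\tr(AB)=\tr\bigl((A+B)^2\bigr)-\tr(A^2)-\tr(B^2)=0$ does not settle the matter — it leaves the case of characteristic $2$ open — whereas routing the argument through the second elementary symmetric invariant $c_2$, which has integer coefficients, handles all characteristics uniformly.
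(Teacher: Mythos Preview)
Your proof is correct and follows essentially the same route as the paper: both define $c_2(M)$ as the sum of $2\times 2$ principal minors (equivalently, the coefficient of $t^{n-2}$ in the characteristic polynomial), establish the polarization identity $c_2(A+B)-c_2(A)-c_2(B)=\tr(A)\tr(B)-\tr(AB)$ by direct expansion, and then conclude by plugging in the vanishing of $c_2$ and $\tr$ on nilpotent matrices. Your additional remarks on why the naive $2\tr(AB)=\tr((A+B)^2)-\tr(A^2)-\tr(B^2)$ argument fails in characteristic $2$ are a nice complement, but the underlying argument is the same as the paper's.
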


\begin{proof}
For $M=(m_{i,j})_{1 \leq i,j \leq n}$, we denote by $c_2(M)$ the coefficient in front of $t^{n-2}$ in the characteristic polynomial of $M$.
Using $c_2(M)=\underset{1 \leq i<j \leq n}{\sum} \begin{vmatrix}
m_{i,i} & m_{i,j} \\
m_{j,i} & m_{j,j}
\end{vmatrix}$, one finds the formula
\begin{equation}
\forall (M,N) \in \Mat_n(\K)^2, \; c_2(M+N)-c_2(M)-c_2(N)=\tr(M)\tr(N)-\tr(MN).
\end{equation}
As $A$, $B$ and $A+B$ are nilpotent, we find $\tr(A)=\tr(B)=0$ and $c_2(A)=c_2(B)=c_2(A+B)=0$, which yields $\tr(AB)=0$.
\end{proof}

\section{The key lemma}\label{keylemma}

\begin{Def}
Let $\calV$ be a subset of $\Mat_n(\K)$.
A vector $X \in \K^n$ is called \defterm{$\calV$-adapted} if it is non-zero and
no matrix of $\calV$ has $X\K$ as its column space.
\end{Def}

\begin{lemme}\label{basiclemma}
Let $\calV$ be a subset of $\Mat_n(\K)$ which is closed under addition and contains only nilpotent matrices,
and denote by $(e_1,\dots,e_n)$ the canonical basis of the $\K$-vector space $\K^n$.
Then one of the vectors $e_1,\dots,e_n$ is $\calV$-adapted.
\end{lemme}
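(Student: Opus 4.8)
The plan is to argue by contradiction. Suppose that none of $e_1,\dots,e_n$ is $\calV$-adapted; since each $e_i$ is non-zero, this means that for every $i\in\{1,\dots,n\}$ there exists $M_i\in\calV$ whose column space is exactly $e_i\K$. Because every column of $M_i$ then lies in $e_i\K$, all rows of $M_i$ vanish except possibly the $i$-th one; writing that $i$-th row as $(c_{i,1},\dots,c_{i,n})$, the fact that the column space equals $e_i\K$ and not a smaller space forces this row to be non-zero. A direct computation gives $(M_i^{\,k})_{i,i}=(c_{i,i})^k$ for every $k\geq 1$, so the nilpotency of $M_i$ yields $c_{i,i}=0$ (a skew field has no zero-divisors). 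Thus we obtain a matrix $C:=(c_{i,j})_{1\leq i,j\leq n}$ with zero diagonal and no zero row.

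Next I would isolate the only consequence of the hypotheses on $\calV$ that is needed. For every non-empty subset $S$ of $\{1,\dots,n\}$, the matrix $\sum_{i\in S}M_i$ lies in $\calV$ (closure under addition) and is therefore nilpotent; but this matrix has $i$-th row equal to $(c_{i,1},\dots,c_{i,n})$ for $i\in S$ and zero otherwise, so after conjugating by a permutation matrix that lists the indices of $S$ first it takes the block form $\begin{pmatrix} C[S] & \ast \\ 0 & 0\end{pmatrix}$, where $C[S]:=(c_{i,j})_{(i,j)\in S\times S}$. Hence every principal submatrix of $C$ is nilpotent. It thus suffices to prove that \emph{no} matrix $C$ can simultaneously have zero diagonal, no zero row, and all of its principal submatrices nilpotent --- which also disposes of the base case $n=1$, where "no zero row" is already impossible. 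Note that testing only the \emph{leading} principal submatrices would not be enough, so the freedom to choose an arbitrary $S$ is essential.

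The combinatorial core is then as follows. Form the directed graph on $\{1,\dots,n\}$ with an arc $i\to j$ whenever $i\neq j$ and $c_{i,j}\neq 0$; since $C$ has zero diagonal and no zero row, every vertex has out-degree at least $1$, so the graph contains a directed cycle. Choose one of minimal length, say on the vertex set $S=\{v_1,\dots,v_m\}$ with arcs $v_1\to v_2\to\cdots\to v_m\to v_1$ (here $m\geq 2$, since there are no loops). Minimality forces the cycle to be chord-free, so that, once rows and columns are listed in the order $v_1,\dots,v_m$, the principal submatrix $C[S]$ has non-zero entries only in the cyclic positions $(v_a,v_{a+1})$ (indices modulo $m$). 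A short computation then shows that $C[S]^{\,m}$ is diagonal with all its diagonal entries non-zero, each being a product of the $m$ non-zero scalars $c_{v_a,v_{a+1}}$ read around the cycle; here the only property of $\K$ used is that a product of non-zero elements is non-zero. Hence $C[S]^{\,m}$ is invertible and $C[S]$ is not nilpotent, contradicting the previous paragraph and completing the proof.

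As for the difficulty: no individual step is hard, and non-commutativity costs essentially nothing. The only real idea is the reduction performed in the second paragraph --- recognizing that one should feed \emph{all} the principal submatrices $C[S]$ into the nilpotency hypothesis --- together with the choice of a \emph{minimal} directed cycle in the third, which turns the relevant principal block into an explicit weighted cyclic-permutation matrix whose non-nilpotency is visible at once.
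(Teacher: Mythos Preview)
Your proof is correct. Both arguments share the same endgame --- locate a cycle and show that the corresponding sum of rank-one matrices has a non-nilpotent power --- but they reach it by different routes. The paper proceeds by induction on $n$: restricting to the matrices of $\calV$ with zero last row and invoking the induction hypothesis on their upper-left $(n-1)\times(n-1)$ blocks, it refines each $M_i$ (after permuting coordinates) to a matrix with a \emph{single} non-zero entry $a_k E_{f(k),k}$, so that the cycle comes directly from the function $f$ and the final power computation involves only elementary matrices. Your argument is induction-free: you keep the full rows $(c_{i,1},\dots,c_{i,n})$, observe that closure under addition forces every principal submatrix $C[S]$ to be nilpotent, and then pick a \emph{minimal} directed cycle so that the chord-free property makes $C[S]$ a weighted cyclic permutation matrix. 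The paper's approach buys a slightly cleaner final computation at the cost of setting up the inductive machinery; yours is more self-contained and highlights the combinatorial content (principal submatrices, minimal cycles) more transparently.
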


The proof is largely similar to that of Proposition 10 in \cite{dSPlargerank}.

\begin{proof}
The result is trivial for $n=1$. We use an induction, assuming, given an integer $n \geq 2$,
that the result holds for the integer $n-1$.
Let $\calV$ be a subset of $\Mat_n(\K)$ which is closed under addition and contains only nilpotent matrices.
We assume that none of $e_1,\dots,e_n$ is $\calV$-adapted. \\
For $(i,j) \in \lcro 1,n\rcro^2$, we denote by $E_{i,j}$ the matrix of $\Mat_n(\K)$ with a zero entry everywhere except at the
$(i,j)$-spot where the entry is $1$.
Denote by $\calW$ the subset of $\calV$ consisting of its matrices with a zero $n$-th row.
Every $M \in \calW$ may be written as
$$M=\begin{bmatrix}
K(M) & [?]_{(n-1) \times 1} \\
[0]_{1 \times (n-1)} & 0
\end{bmatrix} \quad \text{with $K(M) \in \Mat_{n-1}(\K)$,}$$
so that $K(\calW)$ consists of nilpotent matrices and is obviously closed under addition.
By induction, we know that there is some $i \in \lcro 1,n-1\rcro$ such that $e_i$ is $K(\calW)$-adapted
(identifying $\K^{n-1}$ with the subspace $\K^{n-1} \times \{0\}$ of $\K^n$ in the usual way).
However, we have assumed that $e_i$ is not $\calV$-adapted, therefore some matrix $M$ of $\calV$
has all rows zero except the $i$-th. Then $M \in \calW$, and as $e_i$ is $K(\calW)$-adapted, we find that $K(M)=0$.
Thus, $M=a\,E_{i,n}$ for some $a \in \K \setminus \{0\}$. \\
Now, the same argument may be applied to $P\,\calV\,P^{-1}$ for any $n \times n$ permutation matrix $P$.
By doing so, we find a map $f : \lcro 1,n\rcro \rightarrow \lcro 1,n\rcro$ and a list $(a_1,\dots,a_n) \in (\K \setminus \{0\})^n$
such that $\calV$ contains $a_k\,E_{f(k),k}$ for all $k \in \lcro 1,n\rcro$.
Let us choose a cycle for $f$, i.e.\ a list $(i_1,\dots,i_p)$ of pairwise distinct elements of $\lcro 1,n\rcro$
such that $f(i_1)=i_2,\dots,f(i_{p-1})=i_p$ and $f(i_p)=i_1$. To obtain such a cycle, one notes that
some element in the sequence $(f^i(1))_{i \geq 0}$ appears several times, to the effect that one may choose non-negative integers
$i<j$, with $j-i$ minimal, such that $f^i(1)=f^j(1)$; then
$(i_1,\dots,i_p):=(f^i(1),\dots,f^{j-1}(1))$ is a cycle for $f$.

Then, the matrix $M:=\underset{k=1}{\overset{p}{\sum}}a_{i_k} E_{f(i_k),i_k}$ belongs to $\calV$ and satisfies
$M^p e_{i_1}=e_{i_1}\,\Bigl(\underset{k=1}{\overset{p}{\prod}} a_{i_{p+1-k}}\Bigr)$. This shows that $M$ is non-nilpotent, which is a contradiction.
This \emph{reductio ad absurdum} yields that some $e_j$ is $\calV$-adapted, which concludes the proof by induction.
\end{proof}

\section{Proving the inequality statement}\label{inequalityproof}

Now, we use Lemma \ref{basiclemma} to obtain point (a) of Theorem \ref{Gerstenhaberskew}, just as Proposition 10 was used
to obtain Theorem 9 in \cite{dSPlargerank}.

Again, we use an induction on $n$. The case $n=1$ is trivial.
Let $\calV$ be a nilpotent linear subspace of the $\K_0$-vector space $\Mat_n(\K)$.
First of all, we know that some $e_i$ is $\calV$-adapted.
Replacing $\calV$ with $P\,\calV\,P^{-1}$ for a well-chosen permutation matrix $P$, we may assume that
$e_n$ is $\calV$-adapted. In that case, we write every matrix of $\calV$ as
$$M=\begin{bmatrix}
K(M) & C(M) \\
L(M) & a(M)
\end{bmatrix},$$
where $K(M)$, $C(M)$, $L(M)$ are respectively $(n-1) \times (n-1)$, $(n-1) \times 1$, $1 \times (n-1)$ matrices, and $a(M) \in \K$.
Set
$$\calW_1:=\bigl\{M \in \calV : \; C(M)=0\bigr\}.$$
Any $M \in \calW_1$ is nilpotent, which yields that $a(M)=0$ and $K(M)$ is nilpotent.
Moreover, that $e_n$ is $\calV$-adapted yields:
$$\forall M \in \calW_1, \; K(M)=0 \Rightarrow M=0.$$
Using the rank theorem, one finds
$$\dim_{\K_0} \calV=\dim_{\K_0} K(\calW_1)+\dim_{\K_0} C(\calV).$$
As $K(\calW_1)$ is a nilpotent $\K_0$-linear subspace of $\Mat_n(\K)$ and $C(\calV) \subset \K^{n-1}$, the induction hypothesis yields
$$\dim_{\K_0} \calV \leq q\,\binom{n-1}{2}+q\,(n-1)=q\,\binom{n}{2}.$$
Thus, point (a) of Theorem \ref{Gerstenhaberskew} is proven by induction on $n$.

\section{Solving the case of equality}\label{equalityproof}

Here, we prove point (b) of Theorem \ref{Gerstenhaberskew} by induction on $n$.
The case $n=1$ is trivial.

\subsection{The case $n=2$}

This case is trivial if $\K=\K_0$ but otherwise needs an explanation.
Let $A,B$ be non-zero nilpotent matrices of $\Mat_2(\K)$ such that $A+B$ is nilpotent. Assume that $\Ker A \neq \Ker B$.
Then $\K^2=\Ker A \oplus \Ker B$, and we may therefore find a basis $(f_1,f_2)$ of the $\K$-vector space $\K^2$
such that $f_1 \in \Ker A$ and $f_2 \in \Ker B$. This yields some $P \in \GL_2(\K)$ and some $(a,b)\in (\K \setminus \{0\})^2$ such that
$$PAP^{-1}=\begin{bmatrix}
0 & a \\
0 & 0
\end{bmatrix} \quad \text{and} \quad PBP^{-1}=\begin{bmatrix}
0 & 0 \\
b & 0
\end{bmatrix}.$$
Therefore $P(A+B)P^{-1}=\begin{bmatrix}
0 & a \\
b & 0
\end{bmatrix}$, which is a non-singular matrix. This is a contradiction. \\
Now, let $\calV$ be a $q$-dimensional linear subspace of the $\K_0$-vector space $\Mat_2(\K)$ in which every matrix is nilpotent.
Choose $A \in \calV \setminus \{0\}$. Then we have just shown that every non-zero matrix of $\calV$ vanishes on $\Ker A$.
Choosing a basis $(g_1,g_2)$ of the $\K$-vector space $\K^2$ with $g_1$ in $\Ker A$, we find a non-singular matrix $P \in \GL_2(\K)$
such that every matrix of $P \calV P^{-1}$ has a zero first column. As $P\calV P^{-1}$ is nilpotent, we deduce that
$P \calV P^{-1} \subset \NT_2(\K)$, and the equality of dimensions yields $P \calV P^{-1}=\NT_2(\K)$.

\subsection{Setting things up for $n \geq 3$}\label{setup}

In the rest of the proof, we assume that $n \geq 3$ and that point (b) of Theorem \ref{Gerstenhaberskew} holds
for any nilpotent linear subspace of the $\K_0$-vector space $\Mat_{n-1}(\K)$.

Let $\calV$ be a nilpotent $\K_0$-linear subspace of $\Mat_n(\K)$ with dimension $q\dbinom{n}{2}$.
Seing $\calV$ as a set of linear endomorphisms of the right-$\K$-vector space $\K^n$,
what we need is to find a basis $(e'_1,\dots,e'_n)$ of the $\K$-vector space $\K^n$ in which the operators in $\calV$ are represented exactly by the strictly upper-triangular $n \times n$
matrices. Our method is to construct such a basis step-by-step. Equivalently, we will replace successively $\calV$
with similar linear subspace of matrices in order to simplify $\calV$ more and more, until we finally find the space $\NT_n(\K)$.
Let us quickly lay out the sequence of choices that we will make:
\begin{itemize}
\item We will start by choosing the last vector $e'_n$ among the vectors that are $\calV$-adapted.
Then we will choose a basis $(\overline{e'_1},\dots,\overline{e'_{n-1}})$ of the quotient space $\K^n/(e'_n \K)$ that is well-suited to $\calV$.
Those first two operations will be done within the current section.
\item At this point, each one of the vectors $e'_1,\dots,e'_{n-1}$ will be well determined \emph{up to addition of a vector of $e'_n\K$.}
\item A reasonable choice of $e'_2,\dots,e'_{n-1}$ will then be obtained (Section \ref{cornercompatsection}).
\item A reasonable choice of $e'_1$ will come last, after a more extensive inquiry (in the end of Section \ref{analysefetg}).
\end{itemize}

In the rest of the proof, we denote by $(e_1,\dots,e_n)$ the canonical basis of the $\K$-vector space $\K^n$.
As in Section \ref{inequalityproof}, we lose no generality in assuming that
$e_n$ is $\calV$-adapted. With the same notation as in Section \ref{inequalityproof}, we deduce from the
equality $\dim_{\K_0} \calV=q\,\binom{n}{2}$ that
$$\dim_{\K_0} K(\calW_1)=q\,\binom{n-1}{2} \quad \text{and} \quad \dim_{\K_0} C(\calV)=q\,(n-1).$$
Set
$$\calV_{\text{ul}}:=K(\calW_1)$$
(the subscript ``ul" stands for ``upper left").
Using the induction hypothesis, we deduce that:
\begin{itemize}
\item[(A)] There exists $Q \in \GL_{n-1}(\K)$ such that $Q\, \calV_{\text{ul}}\, Q^{-1}=\NT_{n-1}(\K)$.
\item[(B)] $C(\calV)=\K^{n-1}$.
\end{itemize}
Setting $P_1:=Q\oplus 1$ and replacing $\calV$ with $P_1 \calV P_1^{-1}$ leaves conditions (A) and (B) unchanged
and does not modify the assumption that $e_n$ is adapted to the space under consideration.
Therefore, we may now assume, in addition to those properties:
\begin{itemize}
\item[(A')] $\calV_{\text{ul}}=\NT_{n-1}(\K)$.
\end{itemize}

\subsection{Corner-compatibility and special matrices in $\calV$}\label{cornercompatsection}

Here, we will repeat part of the strategy of Section \ref{setup}.
Let $M \in \calV$ and assume that $M$ vanishes on $e_2,\dots,e_n$. Then $M \in \calW_1$.
Using $K(M) \in \NT_{n-1}(\K)$, we find $K(M)=0$ and therefore $M=0$. It follows that
$e_1$ is $\calV^T$-adapted.

For any $M$ in $\calV$, we now write:
$$M=\begin{bmatrix}
b(M) & R(M) \\
[?]_{(n-1)\times 1} & I(M)
\end{bmatrix},$$
where $R(M)$ and $I(M)$ are respectively $1 \times (n-1)$ and $(n-1) \times (n-1)$ matrices, and $b(M) \in \K$.
We set
$$\calW_2:=\bigl\{M \in \calV : \; R(M)=0\bigr\},$$
which is a nilpotent linear subspace of the $\K_0$-vector space $\Mat_n(\K)$. Thus $b(M)=0$ for every $M \in \calW_2$,
and $\calV_{\text{lr}}:=I(\calW_2)$ is a nilpotent linear subspace of the $\K_0$-vector space $\Mat_{n-1}(\K)$
(the subscript ``lr" stands for ``lower-right").
Finally, as $e_1$ is $\calV^T$-adapted, we find that
$$\forall M \in \calW_2, \; I(M)=0 \Rightarrow M=0.$$
Using the rank theorem, we deduce that
$$\dim_{\K_0} \calV=\dim_{\K_0} \calV_{\text{lr}}+\dim_{\K_0} R(\calV).$$
As in Section \ref{setup}, equality $\dim_{\K_0} \calV=q\,\binom{n}{2}$ and the induction hypothesis yield:
\begin{itemize}
\item[(C)] There exists $Q' \in \GL_{n-1}(\K)$ such that $\calV_{\text{lr}}=Q'\,\NT_{n-1}(\K)\,(Q')^{-1}$.
\end{itemize}
We aim at modifying $\calV$ once more so as to keep (A') and (B) while sharpening (C).

\begin{Rem}
In the rest of the proof, every matrix of $\Mat_n(\K)$ will be written as a block matrix with the following shape:
$$\begin{bmatrix}
? & [?]_{1 \times (n-2)} & ? \\
[?]_{(n-2) \times 1} & [?]_{(n-2) \times (n-2)} & [?]_{(n-2) \times 1} \\
? & [?]_{1 \times (n-2)} & ?
\end{bmatrix},$$
where the question marks in the corners represent scalars.
\end{Rem}

\vskip 3mm
Let us find some special matrices in $\calV$.
First of all, (A') yields:
\begin{itemize}
\item[(D)] There are $\K_0$-linear mappings
$\varphi : \Mat_{1,n-2}(\K) \rightarrow \Mat_{1,n-2}(\K)$ and $f : \Mat_{1,n-2}(\K) \rightarrow \K$ such that, for
every $L \in \Mat_{1,n-2}(\K)$, the space $\calV$ contains
$$A_L:=\begin{bmatrix}
0 & L & 0 \\
0 & 0 & 0 \\
f(L) & \varphi(L) & 0
\end{bmatrix}.$$
\end{itemize}

\vskip 2mm
Let $C \in \Mat_{n-2,1}(\K)$.
By (B), we know that $\calV$ contains a matrix of the form
$\begin{bmatrix}
? & ? & 0 \\
? & ? & C \\
? & ? & ?
\end{bmatrix}$. By summing it with a matrix of type $A_L$, we may assume furthermore that its first row has the form
$\begin{bmatrix}
? & 0 & \cdots & 0
\end{bmatrix}$: in that case this row is zero as explained above. Therefore, $\calV$ contains a matrix of the following form:
\begin{equation}\label{typeC}\begin{bmatrix}
0 & 0 & 0 \\
? & ? & C \\
? & ? & ?
\end{bmatrix}.
\end{equation}
On the other hand, we know from (A') that, for every $U \in \NT_{n-2}(\K)$, the subspace $\calV$ contains a matrix of the form
\begin{equation}\label{typeU}\begin{bmatrix}
0 & 0 & 0 \\
0 & U & 0 \\
? & ? & 0
\end{bmatrix}.
\end{equation}

We shall now use those observations to prove the following:

\begin{claim}\label{cornercompatclaim}
There exists a row matrix $L \in \Mat_{1,n-2}(\K)$ such that, for $Q_1:=\begin{bmatrix}
I_{n-2} & [0]_{(n-2) \times 1} \\
L & 1
\end{bmatrix}$, one has $Q_1 \,\calV_{\text{lr}}\, Q_1^{-1}=\NT_{n-1}(\K)$.
\end{claim}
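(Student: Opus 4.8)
The plan is to exploit the fact, recorded as (C), that $\calV_{\text{lr}} = Q' \NT_{n-1}(\K) (Q')^{-1}$ for some $Q' \in \GL_{n-1}(\K)$, together with the explicit matrices of types \eqref{typeC} and \eqref{typeU} already located in $\calV$. The matrices of type \eqref{typeU} show that $\calV_{\text{lr}}$ contains the copy of $\NT_{n-2}(\K)$ sitting in the upper-left $(n-2)\times(n-2)$ block (i.e. the matrices $\begin{bmatrix} U & 0 \\ 0 & 0\end{bmatrix}$ with $U \in \NT_{n-2}(\K)$), and the matrices of type \eqref{typeC} show that $\calV_{\text{lr}}$ contains, for every $C \in \Mat_{n-2,1}(\K)$, a matrix of the form $\begin{bmatrix} [?] & C \\ [?] & ?\end{bmatrix}$ whose bottom-left and bottom-right entries are as yet unconstrained. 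The idea is that these two families already pin down $\calV_{\text{lr}}$ up to a conjugation by a matrix $Q_1$ of the very special shape $\begin{bmatrix} I_{n-2} & 0 \\ L & 1\end{bmatrix}$, and the task reduces to a dimension count plus an elementary normalization.

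First I would set $\calV'_{\text{lr}} := Q_1 \calV_{\text{lr}} Q_1^{-1}$ and note that conjugation by such a $Q_1$ fixes the upper-left block (so the $\NT_{n-2}(\K)$-part is preserved for every choice of $L$) and acts on the last column of an element of $\calV_{\text{lr}}$ by adding an $L$-dependent correction to its bottom entry, while leaving the top $(n-2)$ entries of that column unchanged. Since $\dim_{\K_0}\calV_{\text{lr}} = q\binom{n-1}{2} = q\binom{n-2}{2} + q(n-2)$, and the $\NT_{n-2}(\K)$-block already contributes $q\binom{n-2}{2}$ while the matrices from \eqref{typeC} contribute the full $q(n-2)$-dimensional space of possible last-columns-minus-the-corner, a rank-theorem argument shows $\calV_{\text{lr}}$ is exactly the direct sum of the $\NT_{n-2}(\K)$-block, the space of matrices supported on the top $n-2$ entries of the last column, and a $q$-dimensional "residual" piece. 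I would then argue that this residual piece is spanned over $\K_0$ by matrices of the form $\begin{bmatrix} 0 & 0 \\ \ell(M) & c(M)\end{bmatrix}$ where $\ell$ and $c$ are $\K_0$-linear, and that nilpotency of these (together with nilpotency of their sums with the already-identified elements) forces $c(M) = 0$: indeed a nonzero corner entry in the bottom-right position, added to a suitable element of the $\NT_{n-2}(\K)$-block and the last-column space, would produce a matrix with a nonzero trace or an obvious non-nilpotent cyclic action, contradicting nilpotency of $\calV_{\text{lr}}$. Finally, the map $M \mapsto \ell(M)$ on this $q$-dimensional residual space, composed appropriately, produces a single row matrix $L \in \Mat_{1,n-2}(\K)$ — essentially the "graph" data — such that conjugating by $Q_1 = \begin{bmatrix} I_{n-2} & 0 \\ L & 1\end{bmatrix}$ kills the $\ell$-component, leaving precisely $\NT_{n-1}(\K)$.

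The main obstacle I anticipate is the bookkeeping needed to show that the residual piece is genuinely $q$-dimensional and parametrized by a single row $L$, rather than by a more complicated $\K_0$-linear map $\Mat_{1,n-2}(\K) \to \cdots$; this is where the precise equality of dimensions, rather than just an inequality, must be used, and where one must check that the various $\K_0$-linear maps ($\ell$, $c$) are in fact controlled by $\K$-linear (right-multiplication) data so that a \emph{single} conjugating matrix $Q_1$ works for the whole space simultaneously. The nilpotency argument ruling out the corner entry $c(M)$ is the other delicate point: one has to be careful that over a skew field the trace-based reasoning of Lemma \ref{orthogonality} is unavailable, so the argument must instead exhibit an explicit non-nilpotent combination (a cyclic-vector computation in the spirit of the proof of Lemma \ref{basiclemma}), which should go through because the relevant off-diagonal entries can be made invertible.
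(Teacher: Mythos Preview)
Your proposal has a genuine gap, and the dimensional bookkeeping in the second paragraph is off. First, the matrices of type \eqref{typeU} do \emph{not} show that $\calV_{\text{lr}}$ contains $\begin{bmatrix} U & 0 \\ 0 & 0\end{bmatrix}$: the $I$-image of a type-\eqref{typeU} matrix is $\begin{bmatrix} U & 0 \\ ? & 0\end{bmatrix}$ with an uncontrolled bottom-left row. Second, and more seriously, since $\dim_{\K_0}\calV_{\text{lr}} = q\binom{n-2}{2}+q(n-2)$ exactly, there is no room for an additional $q$-dimensional ``residual piece'' on top of the two pieces you name; a rank-theorem argument gives only that the kernel of the projection to the top $(n-2)$ entries of the last column has $\K_0$-dimension $q\binom{n-2}{2}$, and this kernel is precisely the span of the type-\eqref{typeU} images (with their uncontrolled bottom rows). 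So the structure you want to analyse is not a separate summand but the collection of bottom rows attached to the type-\eqref{typeU} and type-\eqref{typeC} images --- a priori an arbitrary $\K_0$-linear functional of $(U,C)$. You correctly flag in your last paragraph that the crux is showing this $\K_0$-linear data is governed by a single element $L \in \Mat_{1,n-2}(\K)$, but you offer no mechanism for it; a cyclic-vector/nilpotency computation in the style of Lemma~\ref{basiclemma} will not manufacture $\K$-linearity out of $\K_0$-linearity here.

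The paper's proof bypasses this difficulty entirely by looking at the common \emph{range} rather than at individual elements. Since $\calV_{\text{lr}}=Q'\,\NT_{n-1}(\K)\,(Q')^{-1}$, one has $\calV_{\text{lr}}\,x \subset Q'\Vect_\K(e_1,\dots,e_{n-2})$ for every $x\in\K^{n-1}$; the type-\eqref{typeC} matrices then force $\calV_{\text{lr}}\,e_{n-1}$ to equal this hyperplane. Because this hyperplane is a $\K$-subspace that meets $e_{n-1}\K$ trivially (by nilpotency), it is automatically the graph of a right-$\K$-linear map $\K^{n-2}\to\K$, i.e.\ of left-multiplication by a single row $L$. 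Conjugating by the corresponding $Q_1$ kills the last row of \emph{every} element of $\calV_{\text{lr}}$ at once (since the whole range lies in the hyperplane), and then the type-\eqref{typeU} matrices plus the dimension bound finish the job. The point you were missing is that working with the image space gives $\K$-linearity for free.
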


\begin{proof}
Let us consider a matrix $Q'$ given by property (C). Denote by $(e_1,\dots,e_{n-1})$ the canonical basis of the $\K$-vector space
$\K^{n-1}$.
Then $\calV_{\text{lr}} x \subset Q'\Vect_\K(e_1,\dots,e_{n-2})$ for every $x \in \K^{n-1}$.
Using the matrices of type \eqref{typeC}, we find that $\calV_{\text{lr}} e_{n-1}$ contains a $q(n-2)$-dimensional subspace of the $\K_0$-vector space
$\K^{n-1}$. Therefore $\calV_{\text{lr}} e_{n-1}=Q'\Vect_\K(e_1,\dots,e_{n-2})$,
and in particular $\calV_{\text{lr}} e_{n-1}$ is an $(n-2)$-dimensional $\K$-linear subspace of $\K^{n-1}$.
Moreover, $\calV_{\text{lr}} e_{n-1}$ has a trivial intersection with $ e_{n-1}\,\K$
since every matrix of $\calV$ is nilpotent. This yields a $\K$-linear map $u : \K^{n-2} \rightarrow \K$ such that
$\calV_{\text{lr}} e_{n-1}=\bigl\{(y,u(y))\mid y \in \K^{n-2}\bigr\}$. Writing $u$ as $(y_1,\dots,y_{n-2}) \mapsto a_1y_1+\cdots+a_{n-2} y_{n-2}$
for some $(a_1,\dots,a_{n-2})\in \K^{n-2}$, we set $L:=\begin{bmatrix}
-a_1 & \cdots & -a_{n-2}
\end{bmatrix}$ and $Q_1:=\begin{bmatrix}
I_{n-2} & [0]_{(n-2) \times 1} \\
L & 1
\end{bmatrix}$. As $\calV_{\text{lr}} x \subset \calV_{\text{lr}} e_{n-1}$ for every $x \in \K^{n-1}$, we deduce that the last row of
every matrix of $\calU:=Q_1 \calV_{\text{lr}} Q_1^{-1}$ is zero.

We now wish to prove that $\calU=\NT_{n-1}(\K)$.
First of all, any matrix $N$ of $\calU$ may be written as
$$N=\begin{bmatrix}
T(N) & [?]_{(n-2) \times 1} \\
[0]_{1 \times (n-2)} & 0
\end{bmatrix} \quad \text{where $T(N)$ is an $(n-2) \times (n-2)$-matrix.}$$
Then $T(\calU)$ is a nilpotent linear subspace of the $\K_0$-vector space $\Mat_{n-2}(\K)$.
With the shape of $Q_1$ and the matrices of type \eqref{typeU}, we find that
$T(\calU)$ contains $\NT_{n-2}(\K)$. As $\dim_{\K_0} T(\calU) \leq q\,\binom{n-2}{2}=\dim_{\K_0} \NT_{n-2}(\K)$
by point (a) in Theorem \ref{Gerstenhaberskew}, we deduce that $T(\calU)=\NT_{n-2}(\K)$.
It follows that $\calU \subset \NT_{n-1}(\K)$, and the equality of dimensions over $\K_0$ then yields $\calU=\NT_{n-1}(\K)$, which finishes the proof.
\end{proof}

With $Q_1$ given by Claim \ref{cornercompatclaim}, we set $P_2:=1 \oplus Q_1$
and replace $\calV$ with $P_2 \calV P_2^{-1}$. Then all the preceding properties are unchanged, but we now have the improved:
\begin{itemize}
\item[(C')] $\calV_{\text{lr}}=\NT_{n-1}(\K)$.
\end{itemize}
Applying that property to the matrices of type \eqref{typeC} and \eqref{typeU}, we find the following properties:

\begin{itemize}
\item[(E)] There is a $\K_0$-linear map $h : \NT_{n-2}(\K) \rightarrow \K$ such that, for every $U \in \NT_{n-2}(\K)$, the space $\calV$
contains the matrix
$$E_U:=\begin{bmatrix}
0 & 0 & 0 \\
0 & U & 0 \\
h(U) & 0 & 0
\end{bmatrix}.$$

\item[(F)] There are two $\K_0$-linear maps $\psi : \Mat_{n-2,1}(\K) \rightarrow \Mat_{n-2,1}(\K)$ and $g : \Mat_{n-2,1}(\K) \rightarrow \K$ such that,
for every $C \in \Mat_{n-2,1}(\K)$, the space $\calV$ contains the matrix
$$B_C:=\begin{bmatrix}
0 & 0 & 0 \\
\psi(C) & 0 & C \\
g(C) & 0 & 0
\end{bmatrix}.$$
\end{itemize}

Finally, for every $a \in \K$, property (B) yields that $\calV$ contains a matrix with entry $a$ at the $(1,n)$-spot:
subtracting matrices of type $A_L$ and $B_C$ from such a matrix yields that $\calV$ contains a matrix of the form
$$J_a=\begin{bmatrix}
? & 0 & a \\
? & ? & 0 \\
? & ? & ?
\end{bmatrix}.$$

\subsection{Analyzing $\varphi$, $\psi$, and performing the last change of basis}\label{analysefetg}

\begin{claim}\label{anafetg1}
For every $L \in \Mat_{1,n-2}(\K)$, there exists $a_L \in \K$ such that $\varphi(L)=a_L\,L$. \\
For every $C \in \Mat_{n-2,1}(\K)$, there exists $b_C \in \K$ such that $\psi(C)=C\,b_C$.
\end{claim}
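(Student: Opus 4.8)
The plan is to exploit the nilpotency of well-chosen sums of the special matrices $A_L$, $E_U$, $B_C$, $J_a$ that we have collected, together with property (C') which says that the lower-right $(n-1)\times(n-1)$ corner of $\calV$ is exactly $\NT_{n-1}(\K)$. The first statement and the second are transpose-symmetric to each other (swapping the roles of $A_L$ and $B_C$, of $\varphi$ and $\psi$, and replacing $\calV$ by $\calV^T$, which is again a nilpotent space of dimension $q\binom n2$ with $e_n$-adapted properties playing out symmetrically), so I will only argue the first one: for every $L\in\Mat_{1,n-2}(\K)$ there is $a_L\in\K$ with $\varphi(L)=a_L\,L$. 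Equivalently, I must show that the row vector $\varphi(L)$ is a left $\K$-multiple of the row vector $L$; the natural way to phrase this is that $\varphi(L)$ and $L$ are $\K$-colinear as elements of the right-$\K$-vector space $\Mat_{1,n-2}(\K)$, i.e.\ $\varphi(L)\in L\,\K$ would be false — one wants $\varphi(L)\in \K\,L$, which for row vectors means exactly $\varphi(L)=a_L L$.

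Here is how I would extract this. Fix $L$ with $\varphi(L)\neq 0$ (the case $\varphi(L)=0$ gives $a_L=0$). Suppose, for contradiction, that $\varphi(L)$ is not a left multiple of $L$. Then one can find a column vector $C\in\Mat_{n-2,1}(\K)$ such that $L\,C=0$ but $\varphi(L)\,C\neq 0$. Now combine the matrix $A_L$ of type (D) with the matrix $B_C$ of type (F): the product $A_L\,B_C$ and $B_C\,A_L$ should be computed blockwise. The key entries: $A_L$ has its $(1,2)$-block equal to $L$ and its $(3,1)$-entry $f(L)$ and $(3,2)$-block $\varphi(L)$; $B_C$ has $(2,1)$-block $\psi(C)$, $(2,3)$-block $C$, and $(3,1)$-entry $g(C)$. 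The hypothesis $L\,C=0$ kills the term that would otherwise appear in the $(1,3)$-corner of $A_L B_C$, while $\varphi(L)\,C\neq 0$ survives somewhere in $B_C A_L$ (in its $(2,\cdot)$ or corner position); using also the matrices $E_U$ (type (E)) and $J_a$ to clean up the remaining corners, I expect to be able to produce inside $\calV$ a matrix whose characteristic polynomial has a nonzero subdominant or constant coefficient — concretely, a $3\times 3$-type ``corner cycle'' $E_{1,n}$–$E_{n,2}$–$E_{2,1}$ pattern forcing $M^3$ to act nontrivially on $e_1$, exactly as in the proof of Lemma \ref{basiclemma}. That contradicts nilpotency of $\calV$, and the contradiction forces $\varphi(L)\in\K L$, i.e.\ $\varphi(L)=a_L L$. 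One must then check $\K_0$-linearity is not needed for the conclusion — $a_L$ is just \emph{some} scalar, chosen per $L$.

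The main obstacle I anticipate is bookkeeping the off-corner blocks so that the ``bad'' product really lands in a position that destroys nilpotency, rather than getting absorbed into the already-upper-triangular part coming from (A') and (C'). In particular the term $\psi(C)$ inside $B_C$ and the terms $f(L), g(C), h(U)$ interact, and I will need to subtract suitable matrices of types $A_{L'}$, $B_{C'}$, $E_U$, $J_a$ to reduce a general element of $\calV$ that I build to the clean ``cyclic'' skew-companion shape before invoking the non-nilpotency argument of Lemma \ref{basiclemma}. A secondary subtlety is that $C$ with $LC=0$ but $\varphi(L)C\neq 0$ exists precisely because $\varphi(L)\notin \K L$: if $n-2=1$ the claim is vacuous (a $1\times 1$ row is automatically a multiple of any nonzero $1\times1$ row), so the genuine content is for $n\geq 4$, where $\dim_\K \Mat_{1,n-2}(\K)\geq 2$ and such a $C$ is furnished by elementary linear algebra over $\K$.
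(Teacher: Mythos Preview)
Your core idea---choose $C$ with $LC=0$ and exploit nilpotency of $M:=A_L+B_C$---is exactly the paper's. But the execution you sketch is substantially heavier than what is actually needed, and your anticipated obstacles do not arise.

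The paper simply computes $M^2$. When $LC=0$, one finds
\[
M^2=\begin{bmatrix}
L\psi(C) & 0 & 0 \\
? & ? & 0 \\
? & ? & \varphi(L)C
\end{bmatrix},
\]
which is block \emph{lower}-triangular in the $1+(n-2)+1$ decomposition; hence nilpotency of $M$ forces the two scalar corner-diagonal entries $L\psi(C)$ and $\varphi(L)C$ to vanish. All the terms you worry about ($\psi(C)$, $f(L)$, $g(C)$) land harmlessly below this block diagonal and play no role. No $M^3$, no corner cycle, no cleanup with $E_U$ or $J_a$ is required.

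Two further simplifications over your plan: (i) both halves of the claim fall out of the \emph{same} computation ($\varphi(L)C=0$ for all $C$ in the right annihilator of $L$ gives $\varphi(L)=a_L L$; $L\psi(C)=0$ for all $L$ in the left annihilator of $C$ gives $\psi(C)=Cb_C$), so the transpose-symmetry detour---which would be awkward to set up rigorously, since $\calV^T$ does not inherit the prepared structure (A'), (C') in any obvious way---is unnecessary; (ii) no case split on $n=3$ versus $n\geq 4$ is needed, since the annihilator argument is vacuously correct when $n-2=1$.
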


\begin{proof}
Let $(L,C) \in \Mat_{1,n-2}(\K) \times \Mat_{n-2,1}(\K)$ be such that $LC=0$. \\
Setting $M:=A_L+B_C$, we compute
$$M^2=\begin{bmatrix}
L\psi(C) & 0 & 0 \\
? & ? & 0 \\
? & ? & \varphi(L)C
\end{bmatrix}.$$
As $M \in \calV$, we know that $M^2$ is nilpotent and therefore
$$\varphi(L)C=0 \quad \text{and} \quad L\psi(C)=0.$$
If we fix $L \in \Mat_{1,n-2}(\K)$, varying $C$ yields that the annihilator
of the row matrix $\varphi(L)$ contains that of $L$, and therefore $\varphi(L)=a_L\,L$ for some $a_L \in \K$.
The same line of reasoning yields the second part of Claim \ref{anafetg1}.
\end{proof}

\begin{claim}\label{anafetg2}
There is a scalar $\lambda \in \K$ such that
$$\forall (L,C)\in \Mat_{1,n-2}(\K) \times \Mat_{n-2,1}(\K), \quad \varphi(L)=\lambda\, L \quad \text{and} \quad \psi(C)=-C\,\lambda.$$
\end{claim}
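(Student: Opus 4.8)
The plan is to combine Claim~\ref{anafetg1} with a single nilpotency test. By Claim~\ref{anafetg1}, for every nonzero $L\in\Mat_{1,n-2}(\K)$ the scalar $a_L$ with $\varphi(L)=a_L\,L$ is uniquely determined, and likewise for every nonzero $C\in\Mat_{n-2,1}(\K)$ the scalar $b_C$ with $\psi(C)=C\,b_C$ is unique. So it suffices to prove, first, that $a_L+b_C=0$ whenever $LC\neq 0$, and, second, that $L\mapsto a_L$ is constant on the nonzero rows; the common value will serve as $\lambda$, and then automatically $b_C=-\lambda$ for all nonzero $C$.

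For the first point I would fix $L,C$ with $LC\neq 0$ and set $M:=A_L+B_C\in\calV$. Let $u_1:=e_1$, let $u_2$ be the column of $\K^n$ whose middle block is $C$ and whose other two blocks are $0$, and let $u_3:=e_n$; since $C\neq 0$, the vectors $u_1,u_2,u_3$ are $\K$-linearly independent. A block computation with (D) and (F) gives $Mu_3=u_2$, $Mu_2=u_1\,(LC)+u_3\,\bigl(a_L\,(LC)\bigr)$ and $Mu_1=u_2\,b_C+u_3\,s$ with $s:=f(L)+g(C)$, so $W:=\Vect_\K(u_1,u_2,u_3)$ is $M$-invariant. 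Since $M$ is nilpotent, the endomorphism of $W$ it induces is a nilpotent endomorphism of a $3$-dimensional $\K$-vector space, hence its cube vanishes and in particular $M^{3}u_3=0$. Computing $M^{3}u_3=M^{2}u_2=M\bigl(u_1(LC)+u_3(a_L(LC))\bigr)=u_2\,\bigl((b_C+a_L)(LC)\bigr)+u_3\,\bigl(s\,(LC)\bigr)$, and using that $u_2,u_3$ are $\K$-independent while $LC$ is an invertible element of $\K$, I get $b_C+a_L=0$ (and, as a byproduct that may be useful later, $s=0$).

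For the second point I would argue by covering. Given nonzero $L_1,L_2$, the sets $\{C:\ L_iC=0\}$ are proper $\K$-subspaces of $\Mat_{n-2,1}(\K)$, being kernels of the nonzero right-$\K$-linear maps $C\mapsto L_iC$; since a vector space over a division ring is never the union of two proper subspaces, there is $C$ with $L_1C\neq 0$ and $L_2C\neq 0$, whence $a_{L_1}=-b_C=a_{L_2}$. Call $\lambda$ this common value. For any nonzero $C$, pick a nonzero $L$ with $LC\neq 0$ (possible since the map $L\mapsto LC$ is nonzero), so $b_C=-a_L=-\lambda$. Finally $\varphi(L)=a_L\,L=\lambda\,L$ for nonzero $L$ and $\varphi(0)=0=\lambda\cdot 0$, while $\psi(C)=C\,b_C=-C\lambda$ for nonzero $C$ and $\psi(0)=0$; this is exactly the claim.

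The step I expect to be the main obstacle is not conceptual but a matter of care in the non-commutative setting: $\K^n$ carries its canonical right-$\K$-vector space structure, so scalars act on vectors from the right, and one must check that each scalar lands on the correct side in the identities $Mu_2=u_1(LC)+u_3(a_L(LC))$ and $M^{3}u_3=u_2((b_C+a_L)(LC))+u_3(s(LC))$ — in particular the factorization $b_C(LC)+a_L(LC)=(b_C+a_L)(LC)$ is legitimate precisely because $a_L$, $b_C$ and $LC$ are genuine scalars of $\K$. Once the test matrix $A_L+B_C$ and the invariant subspace $W$ are identified, the rest is routine.
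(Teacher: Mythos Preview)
Your proof is correct, and it takes a genuinely different route from the paper's. The paper first reduces $\varphi$ and $\psi$ to coordinate-wise actions by $\K_0$-linear endomorphisms $u,v:\K\to\K$ (using Claim~\ref{anafetg1} together with the $\K_0$-linearity of $\varphi,\psi$), and then runs a nilpotency test on $\Vect_\K(e_1,e_2,e_n)$ for $M=A_{L_0}+B_{C_0}$ with $L_0,C_0$ supported on the first coordinate; this yields the functional equation $v(b)\,a+b\,u(a)=0$, from which $u(a)=\lambda a$ and $v(b)=-b\lambda$ with $\lambda:=u(1)$. By contrast, you bypass the coordinate reduction entirely: you exploit the $M$-invariant $3$-plane $\Vect_\K\bigl(e_1,(0,C,0)^T,e_n\bigr)$ directly for arbitrary $L,C$ with $LC\neq 0$, read off $a_L+b_C=0$, and finish with the group-theoretic fact that a module over a division ring is not the union of two proper submodules. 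Your approach is shorter, never touches the auxiliary maps $u,v$, and as a bonus already delivers the identity $f(L)+g(C)=0$ for $LC\neq 0$, which is precisely equation~\eqref{phietpsi} used in the next section; the paper's method, on the other hand, makes the single scalar $\lambda$ appear more explicitly and handles the case $n=3$ without appealing to the covering lemma (which is admittedly trivial there anyway).
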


\begin{proof}
By Claim \ref{anafetg1}, there are endomorphisms $\varphi_1,\dots,\varphi_{n-2}$ of the $\K_0$-vector space $\K$ such that
$$\forall L=\begin{bmatrix}
l_1 & \cdots & l_{n-2}
\end{bmatrix} \in \Mat_{1,n-2}(\K), \; \varphi(L)=\begin{bmatrix}
\varphi_1(l_1) & \cdots & \varphi_{n-2}(l_{n-2})
\end{bmatrix}.$$
Applying Claim \ref{anafetg1} to the row matrices in which all the entries are equal, we find $\varphi_1=\cdots=\varphi_{n-2}$.
As the same line of reasoning applies to $\psi$, we obtain two endomorphisms $u$ and $v$ of the $\K_0$-vector space $\K$ such that
$$\forall L=\begin{bmatrix}
l_1 & \cdots & l_{n-2}
\end{bmatrix} \in \Mat_{1,n-2}(\K), \quad \varphi(L)=\begin{bmatrix}
u(l_1) & \cdots & u(l_{n-2})
\end{bmatrix}$$
and
$$\forall C=\begin{bmatrix}
c_1 & \cdots & c_{n-2}
\end{bmatrix}^T \in \Mat_{n-2,1}(\K), \quad \psi(C)=\begin{bmatrix}
v(c_1) & \cdots & v(c_{n-2})
\end{bmatrix}^T.$$

Let $(a,b) \in \K^2$, and set $L_0:=\begin{bmatrix}
a & 0 & \cdots & 0
\end{bmatrix} \in \Mat_{1,n-2}(\K)$ and $C_0:=\begin{bmatrix}
b & 0 & \cdots & 0
\end{bmatrix}^T \in \Mat_{n-2,1}(\K)$.
We notice that $M:=A_{L_0}+B_{C_0}$ stabilizes the $\K$-subspace $\Vect_\K(e_1,e_2,e_n)$ and induces an endomorphism of it
represented by $N=\begin{bmatrix}
0 & a & 0 \\
v(b) & 0 & b \\
? & u(a) & 0
\end{bmatrix}$. Then $N$ is a $3 \times 3$ nilpotent matrix, and therefore $N^3=0$.
One computes that the entry of $N^3$ at the $(1,2)$-spot is
$a\bigl(v(b)a+bu(a)\bigr)$. For $a \neq 0$, this yields
\begin{equation}\label{uetv}
v(b)\,a+b\,u(a)=0,
\end{equation}
which is also obviously true for $a=0$. \\
Set now $\lambda:=u(1)$. Taking $a=1$ in \eqref{uetv} yields: $v(b)=-b\,\lambda$ for all $b \in \K$.
Thus, $v(1)=-\lambda$, and taking $b=1$ in \eqref{uetv} yields $u(a)=\lambda\, a$ for all $a \in \K$.
This finishes the proof of Claim \ref{anafetg2}.
\end{proof}

\begin{Rem}
In the case $\K=\K_0$, Claim \ref{anafetg2} has a far more simple proof. Indeed,
Claim \ref{anafetg1} then readily yields a pair $(\lambda,\mu) \in \K^2$ such that
$\forall (L,C) \in \Mat_{1,n-2}(\K) \times \Mat_{n-2,1}(\K), \; \varphi(L)=\lambda\,L \quad \text{and} \quad \psi(C)=\mu\, C$;
as $\K$ is commutative, we find $\tr(A_LB_C)=0$ for every $(L,C) \in \Mat_{1,n-2}(\K) \times \Mat_{n-2,1}(\K)$, and hence $\mu+\lambda=0$.
\end{Rem}

Now, we perform one last change of basis. We set $P:=\begin{bmatrix}
1 & 0 & 0 \\
0 & I_{n-2} & 0 \\
-\lambda & 0 & 1
\end{bmatrix} \in \GL_n(\K)$ and we replace $\calV$ with $P\calV P^{-1}$. Note then that all properties
(A'), (B), (C'), (D), (E) and (F) still hold, but we now have a simplified form for the matrices of type $A_L$ and $B_C$:
$$\forall (L,C) \in \Mat_{1,n-2}(\K) \times \Mat_{n-2,1}(\K), \;
A_L=\begin{bmatrix}
0 & L & 0 \\
0 & 0 & 0 \\
f(L) & 0 & 0
\end{bmatrix} \; \text{and} \;
B_C=\begin{bmatrix}
0 & 0 & 0 \\
0 & 0 & C \\
g(C) & 0 & 0
\end{bmatrix}.$$
From there, our aim is to prove that $\calV=\NT_n(\K)$.
In order to do so, we will show that all the matrices of type $A_L$, $B_C$, $E_U$ and $J_a$
are strictly upper-triangular. This will prove the inclusion $\NT_n(\K) \subset \calV$,
and the equality of dimensions over $\K_0$ will help us complete the proof.
We start by showing that $f$ and $g$ vanish everywhere.

\subsection{The vanishing of $f$ and $g$}

\begin{claim}\label{phipsiclaim}
One has $f=0$ and $g=0$.
\end{claim}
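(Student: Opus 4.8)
The plan is to exploit nilpotency of well-chosen linear combinations of the special matrices $A_L$, $B_C$, $E_U$, and $J_a$ in $\calV$, just as in the proofs of Claims \ref{anafetg1} and \ref{anafetg2}. The basic idea: after the last change of basis, $A_L$ and $B_C$ have the simple shape with a single nonzero "corner" entry $f(L)$ (resp.\ $g(C)$) in the bottom-left spot besides the $L$-block (resp.\ $C$-block). A nonzero value $f(L)\neq 0$ creates a "loop" from the top row down to the bottom-left corner and back up, which should force non-nilpotency once we combine with a matrix carrying mass in the complementary block.

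First I would handle $g$. Fix $C\in\Mat_{n-2,1}(\K)$ and consider $M:=B_C+E_U$ for a suitable $U\in\NT_{n-2}(\K)$; more simply, look at $M:=B_C$ alone together with a matrix of type \eqref{typeC}-flavour already absorbed into $\calV_{\text{lr}}=\NT_{n-1}(\K)$ by (C'). Concretely, since $\calV_{\text{lr}}=\NT_{n-1}(\K)$ there is in $\calV$ a matrix whose lower-right $(n-1)\times(n-1)$ block is an arbitrary element of $\NT_{n-1}(\K)$; choosing that block to send $e_{n-1}$ to $C$ (viewed inside $\K^{n-2}$) and adding it appropriately to $B_C$, the sum stabilizes a two-dimensional coordinate subspace spanned by $e_n$ and one $e_j$ with $c_j\neq 0$, on which it is represented by $\begin{bmatrix} 0 & c_j \\ g(C) & 0\end{bmatrix}$ (up to relabeling); nilpotency forces $g(C)\,c_j=0$, hence $g(C)=0$ whenever $C$ has a nonzero coordinate, i.e.\ for all $C\neq 0$, and trivially $g(0)=0$. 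The symmetric argument, transposing the roles of rows and columns and using that $\calV_{\text{ul}}=\NT_{n-1}(\K)$ by (A'), gives $f(L)=0$ for all $L$: one forms $A_L$ plus a matrix of type \eqref{typeU}-flavour inside $\NT_{n-1}(\K)$ acting on the top-left block so that $e_1$ is sent to a vector with $l_i\neq0$, restrict to the plane $\Vect_\K(e_1,e_i)$, and read off $f(L)\,l_i=0$.

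The main obstacle I anticipate is making the "restriction to a coordinate plane" genuinely clean: the matrices supplied by (A'), (B), (C') have unspecified entries ("$?$") in several blocks, so the naive restriction to $\Vect_\K(e_1,e_i)$ or $\Vect_\K(e_i,e_n)$ need not be invariant. The fix is the same device used in Claim \ref{anafetg2}: rather than restricting, compute a high enough power $M^k$ directly and track only the single entry that detects the loop through the corner — e.g.\ for $M=A_L+(\text{top-left filler})$ one checks that the $(1,1)$ or $(i,i)$ entry of $M^3$ equals $f(L)\,l_i$ times a product of the filler entries, which can be made nonzero by a free choice inside $\NT_{n-1}(\K)$; nilpotency of $M$ then forces $f(L)\,l_i=0$. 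I would carry this out once in detail for $g$ via $M:=B_C+(\text{suitable element of }\calW_2)$, observing $M^2$ or $M^3$ has the predicted nonzero corner entry unless $g(C)=0$, then invoke symmetry (the transpose argument, legitimate because "$e_1$ is $\calV^T$-adapted" and (A') together mirror "$e_n$ is $\calV$-adapted" and (C')) to conclude $f=0$ as well.
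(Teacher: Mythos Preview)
Your approach has a structural gap. The corner entry $g(C)$ sits at position $(n,1)$, so any ``loop'' detecting it must pass through both $e_1$ and $e_n$: one needs a step $e_1 \to e_n$ (provided by $g(C)$) and then a return path $e_n \to \cdots \to e_1$. But every matrix you propose to add to $B_C$ --- whether $E_U$, a general element of $\calW_2$, or a matrix of type \eqref{typeU} --- has zero first row. Hence any sum $M$ of such matrices with $B_C$ still has zero first row, so $\im M \subset \Vect_\K(e_2,\dots,e_n)$, and $M$ is nilpotent if and only if its restriction to $\Vect_\K(e_2,\dots,e_n)$ is. That restriction does not see the first column of $M$ at all, so nilpotency places no constraint on $g(C)$. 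In particular, your proposed $2\times 2$ block $\begin{bmatrix} 0 & c_j \\ g(C) & 0 \end{bmatrix}$ never materializes: for $g(C)$ to appear, the plane must contain $e_1$, and then the $(1,2)$-entry is zero because the whole first row is. The symmetric obstruction kills your argument for $f$: the fillers you pull from $\calW_1$ all have zero last column, so combined with $A_L$ they produce a matrix whose nilpotency is independent of its last row, hence of $f(L)$.

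Among the special matrices, only the $A_L$'s (and $J_a$'s) have a nonzero first row, and only the $B_C$'s (and $J_a$'s) have a nonzero last column. To couple the corner entry to anything, you are essentially forced to pair $A_L$ with $B_C$. This is exactly what the paper does: for $M=A_L+B_C$ one finds $M^3 e_1 = e_1\,\bigl(LC\,(f(L)+g(C))\bigr)$, yielding $f(L)+g(C)=0$ whenever $LC\neq 0$. Separating $f$ from $g$ then requires an additivity trick (varying $L$ or $C$ while holding the other fixed, with case splits for $n=3$ and for $\K=\F_2$), not a single nilpotency computation.
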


\begin{proof}
We claim that
\begin{equation}\label{phietpsi}
\forall (L,C) \in \Mat_{1,n-2}(\K) \times \Mat_{n-2,1}(\K), \; LC \neq 0 \Rightarrow f(L)+g(C)=0.
\end{equation}
Let indeed $(L,C) \in \Mat_{1,n-2}(\K) \times \Mat_{n-2,1}(\K)$ be such that $LC \neq 0$;
setting $M:=A_L+B_C$, we compute $M^3 e_1=e_1\,\bigl(LC(f(L)+g(C))\bigr)$ and \eqref{phietpsi} follows as $M^3$ is nilpotent.
\begin{itemize}
\item Assume that $n \geq 4$. Let $L \in \Mat_{1,n-2}(\K)$. As $n-2 \geq 2$, we may choose $C \in \Mat_{n-2,1}(\K) \setminus \{0\}$ such that $LC=0$,
and then we may choose $L_1 \in \Mat_{1,n-2}(\K)$ such that $L_1C=1$. Then $(L+L_1)C=1$,
which yields $f(L+L_1)=-g(C)=f(L_1)$. Thus, $f(L)=0$. The same line of reasoning yields $g=0$.
\item Assume that $n=3$ and $\# \K>2$. Let $x \in \K$. Then we may choose $y \in \K \setminus \{0,-x\}$, so that $y \neq 0$ and $x+y \neq 0$.
Therefore, $f(x+y)=-g(1)=f(y)$, and hence $f(x)=0$. The same line of reasoning yields $g=0$.
\item Assume finally that $n=3$ and $\# \K=2$, so that $\K_0=\K \simeq \F_2$. Then, $f(1)=g(1)$. Assume that $f(1)=1$.
Then $\calV$ contains the matrices
$$A:=\begin{bmatrix}
0 & 1 & 0 \\
0 & 0 & 0 \\
1 & 0 & 0
\end{bmatrix} \quad \text{and} \quad B:=\begin{bmatrix}
0 & 0 & 0 \\
0 & 0 & 1 \\
1 & 0 & 0
\end{bmatrix}$$
and a matrix of the form
$$J=\begin{bmatrix}
a & 0 & 1 \\
b & c & 0 \\
d & e & f
\end{bmatrix}.$$
Note that $\K$ is commutative, thus Lemma \ref{orthogonality} yields $\tr(AJ)=\tr(BJ)=0$, and hence $b=e=1$. As $J$ is nilpotent, we also have
$\tr(J)=0$, and hence $f=a+c$.
Using $\forall t \in \K, \; t^2=t \; \text{and} \; 2t=0$, we finally compute:
$$\forall (x,y)\in \K^2, \; 0=\det(J+xA+yB)=1+cd+(a+c)\,y+a\,x+d\,xy.$$
This yields both $cd=1$ and $d=0$, a contradiction.

Therefore, $f(1)=g(1)=0$, and so  $f=0$ and $g=0$, as claimed.
\end{itemize}
\end{proof}

\subsection{The vanishing of $h$}

\begin{claim}
One has $h=0$.
\end{claim}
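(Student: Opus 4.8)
The goal is to show that the $\K_0$-linear map $h:\NT_{n-2}(\K)\to\K$ from property (E) is identically zero, so that each matrix $E_U$ is strictly upper-triangular. The natural strategy is the same nilpotency-of-a-product argument used for $f$ and $g$ in Claim \ref{phipsiclaim}: combine $E_U$ with one of the other special matrices already pinned down — a matrix of type $B_C$ (now in its simplified form $\begin{bmatrix}0&0&0\\0&0&C\\g(C)&0&0\end{bmatrix}$ with $g=0$ by Claim \ref{phipsiclaim}), or of type $A_L$, or a matrix $J_a$ — form the sum, raise it to a suitable power, and read off a constraint on $h(U)$ from the fact that the power must be nilpotent (in practice, that a specific diagonal/corner entry of a high power vanishes).

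Concretely, I would first fix $U\in\NT_{n-2}(\K)$ and pick a column $C\in\Mat_{n-2,1}(\K)$ with $UC\neq 0$ but chosen so that the bottom entry of $UC$ is what interacts with $h(U)$; then $M:=E_U+B_C$ has a block structure in which the $(3,\cdot)$ row feeds the corner via $h(U)$, and the $(\cdot,3)$ column feeds back via $C$. Computing $M^k$ for $k$ large enough (the relevant exponent is governed by the nilpotency index of $U$ on the subspace generated by $C$, plus the two ``corner'' steps), one finds that a corner entry of $M^k$ equals $e_1$ times a product of the form $(h(U))\cdot(\text{something built from }U\text{ and }C)$, and nilpotency forces $h(U)\cdot(\ldots)=0$. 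By choosing $C$ so that the bracketed factor is nonzero — here one uses that $U$, being an arbitrary strictly-upper-triangular matrix, can be taken so that some power of $U$ has a prescribed nonzero entry, and that we are free in the choice of $C$ because (B)/(F) give us all columns $C$ — one concludes $h(U)=0$.

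As with $f$ and $g$, small cases will need separate care: when $n-2\le 1$ (i.e.\ $n=3$) the space $\NT_{n-2}(\K)$ is zero and the claim is vacuous, so really only $n\ge 4$ is at issue; and when $\K$ is very small ($\#\K=2$) one may not have enough freedom in the scalar entries of $C$ and $U$ simultaneously, in which case I would fall back — exactly as in the third bullet of Claim \ref{phipsiclaim} — on the commutative trace identity of Lemma \ref{orthogonality}, computing $\tr(E_U\cdot J_a)$ or $\tr(E_U\cdot B_C)$ and a couple of determinant conditions to force $h(U)=0$ by hand. The main obstacle I anticipate is bookkeeping: getting the block-matrix power $M^k$ right and identifying the precise corner entry that isolates $h(U)$ as a factor, together with verifying that the accompanying factor can be made nonzero uniformly in the (possibly small) field; once the right $C$ and exponent are identified the rest is a routine nilpotency-of-a-product computation parallel to Claims \ref{anafetg1} and \ref{phipsiclaim}.
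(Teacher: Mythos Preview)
Your plan has a genuine gap: the combination $M=E_U+B_C$ (with $g=0$) cannot detect $h(U)$.  Indeed
\[
M=\begin{bmatrix}0&0&0\\0&U&C\\h(U)&0&0\end{bmatrix},
\]
and since the first row is zero, a direct induction gives
\[
M^k=\begin{bmatrix}0&0&0\\U^{k-2}C\,h(U)&U^k&U^{k-1}C\\0&0&0\end{bmatrix}\quad(k\ge 2),
\]
which vanishes for large $k$ automatically because $U$ is nilpotent---no constraint on $h(U)$ arises.  The same phenomenon occurs with $E_U+A_L$ (there the last column is zero).  The point is that $h(U)$ sits at the $(n,1)$ corner, and to make it contribute to a diagonal entry of a power you must be able to travel $e_1\to e_n\to(\text{middle})\to e_1$; that cycle requires \emph{both} a nonzero $(1,\text{middle})$ block (coming from $A_L$) and a nonzero $(\text{middle},n)$ block (coming from $B_C$).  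Your trace fallback for small fields fails for the same structural reason: $\tr(E_U B_C)=0$ identically, and $\tr(E_U J_a)=\tr(U\,T(a))+h(U)\,a$ entangles $h$ with the as-yet-unknown $T(a)$.

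The paper's proof uses exactly this three-term combination: with $L_0=\begin{bmatrix}1&0&\cdots&0\end{bmatrix}$, $C_0=L_0^T$, and $M=A_{L_0}+B_{C_0}+E_U$, one computes $M^3 e_n=e_n\,h(U)$ provided $L_0UC_0=0$ and $U^2=0$.  The second key idea you are missing is this reduction: rather than attacking arbitrary $U$, the paper first proves $h(U)=0$ for all $U$ with $U^2=0$ (in particular for every $aE_{i,j}$ with $i<j$), and then concludes by $\K_0$-linearity of $h$.  This avoids the bookkeeping of high powers that your plan anticipates and works uniformly over all fields, with no separate small-field case.
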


\begin{proof}
Let $U \in \NT_{n-2}(\K)$ be such that $U^2=0$.
Set $L_0:=\begin{bmatrix}
1 & 0 & \cdots & 0
\end{bmatrix} \in \Mat_{1,n-2}(\K)$ and $C_0:=L_0^T$, so that $L_0UC_0=0$ and $L_0C_0=1$.
Setting $M:=A_{L_0}+B_{C_0}+E_U$, one checks that $M^3 e_n=e_n\,h(U)$, and therefore
$h(U)=0$. \\
In particular, $h(E_{i,j}\,a)=0$ for every $a \in \K$ and every $(i,j)\in \lcro 1,n-2\rcro^2$ with $j>i$
(where $E_{i,j}$ is the matrix with all entries zero except at the $(i,j)$-spot where the entry is $1$).
As $h$ is additive, we deduce that $h$ vanishes everywhere on $\NT_{n-2}(\K)$.
\end{proof}

\subsection{The matrices of type $J_a$}

\subsubsection{Simplifying the $J_a$ matrices}

Let us sum up. For every triple $(L,C,U) \in \Mat_{1,n-2}(\K) \times \Mat_{n-2,1}(\K) \times \NT_{n-2}(\K)$, the space
$\calV$ contains the matrices
$$A_L=\begin{bmatrix}
0 & L & 0 \\
0 & 0 & 0 \\
0 & 0 & 0
\end{bmatrix}, \quad
B_C=\begin{bmatrix}
0 & 0 & 0 \\
0 & 0 & C \\
0 & 0 & 0
\end{bmatrix} \quad \text{and} \quad
E_U=\begin{bmatrix}
0 & 0 & 0 \\
0 & U & 0 \\
0 & 0 & 0
\end{bmatrix}.$$

Adding an appropriate $E_U$ to each matrix of type $J_a$, one finds $\K_0$-linear maps
$\alpha : \K \rightarrow \K$, $\beta : \K \rightarrow \K$, $\gamma : \K \rightarrow \K$,
$L_1 : \K \rightarrow \Mat_{1,n-2}(\K)$, $C_1 : \K \rightarrow \Mat_{n-2,1}(\K)$, $T : \K \rightarrow \LT_{n-2}(\K)$
(where $\LT_{n-2}(\K)$ denotes the set of lower-triangular matrices of $\Mat_{n-2}(\K)$)
such that, for every $a \in \K$, the subspace $\calV$ contains
$$J_a:=\begin{bmatrix}
\alpha(a) & 0 & a \\
C_1(a) & T(a) & 0 \\
\beta(a) & L_1(a) & \gamma(a)
\end{bmatrix}.$$
Our aim in what follows is to prove:

\begin{claim}\label{lastclaim}
All the maps $\alpha$, $\beta$, $\gamma$, $L_1$, $C_1$ and $T$ vanish everywhere on $\K$.
\end{claim}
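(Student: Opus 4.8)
One first notes that, granting the claim, $\calV$ contains all the matrices $A_L$, $B_C$, $E_U$ and $J_a$, which are by now all strictly upper-triangular, so $\calV \supseteq \NT_n(\K)$; comparing $\K_0$-dimensions then forces $\calV = \NT_n(\K)$, closing the induction and finishing the proof of Theorem~\ref{Gerstenhaberskew}. Thus everything rests on the claim, and the plan is to prove it in three stages. In the first stage I would exploit an auxiliary reduction in dimension $n-1$. Put $\calW'_1 := \{M \in \calV : Me_1 = 0\}$. For $M \in \calW'_1$ the line $e_1\K$ lies in $\Ker M$, so $M$ induces an operator $\widetilde M$ on $\K^n/(e_1\K)$, represented in the basis $(\overline{e_2},\dots,\overline{e_n})$ by the matrix obtained from $M$ by deleting its first row and column; since $Me_1 = 0$, $M$ is nilpotent if and only if $\widetilde M$ is. Hence $\{\widetilde M : M \in \calW'_1\}$ is a nilpotent $\K_0$-linear subspace of $\Mat_{n-1}(\K)$, and as $B_C, E_U \in \calW'_1$ with the $\widetilde{B_C}$, $\widetilde{E_U}$ already spanning $\NT_{n-1}(\K)$, point~(a) of Theorem~\ref{Gerstenhaberskew} forces $\{\widetilde M : M \in \calW'_1\} = \NT_{n-1}(\K)$. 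For $a$ with $\alpha(a)=C_1(a)=\beta(a)=0$ we have $J_a \in \calW'_1$ and $\widetilde{J_a} = \begin{bmatrix} T(a) & 0 \\ L_1(a) & \gamma(a)\end{bmatrix} \in \NT_{n-1}(\K)$, which, since $T(a)$ is also lower-triangular, forces $T(a)=L_1(a)=\gamma(a)=0$. A symmetric argument with $\calW'_2 := \{M \in \calV : \text{the last row of $M$ is zero}\}$ (now $\Vect_\K(e_1,\dots,e_{n-1})$ is stable, one deletes the last row and column, and $A_L,E_U$ recover $\NT_{n-1}(\K)$) shows that every $a$ with $\beta(a)=L_1(a)=\gamma(a)=0$ also satisfies $\alpha(a)=C_1(a)=T(a)=0$. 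Consequently the three $\K_0$-subspaces $\{a : \alpha(a)=C_1(a)=\beta(a)=0\}$, $\{a : \beta(a)=L_1(a)=\gamma(a)=0\}$ and $\{a : J_a \in \NT_n(\K)\}$ of $\K$ all coincide; call this common subspace $\K_\sharp$. It then remains only to prove $\K_\sharp = \K$.

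In the second stage I would reduce the remaining content to the vanishing of $C_1$ and $L_1$. Fix $a$ and suppose momentarily that $C_1(a)=0$ and $L_1(a)=0$. Then for every $U \in \NT_{n-2}(\K)$ the matrix $J_a + E_U \in \calV$ is, in the reordered basis $(e_1,e_n,e_2,\dots,e_{n-1})$, block-diagonal with blocks $\begin{bmatrix} \alpha(a) & a \\ \beta(a) & \gamma(a)\end{bmatrix}$ and $T(a)+U$; its nilpotency forces $T(a)+U$ nilpotent for all $U$, so $\NT_{n-2}(\K)+\K_0\,T(a)$ is a nilpotent subspace of $\Mat_{n-2}(\K)$, and point~(a) of Theorem~\ref{Gerstenhaberskew} yields $T(a) \in \NT_{n-2}(\K)$, whence $T(a)=0$ as $T(a)$ is lower-triangular. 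Assuming further $C_1(a)=L_1(a)=T(a)=0$, the matrix $J_a + A_{L_0} + B_{C_0}$ with $L_0 = C_0^T = \begin{bmatrix} 1 & 0 & \cdots & 0\end{bmatrix}$ stabilizes $\Vect_\K(e_1,e_2,e_n)$ and acts there, in the basis $(e_1,e_2,e_n)$, as the nilpotent $3\times 3$ matrix $\begin{bmatrix} \alpha(a) & 1 & a \\ 0 & 0 & 1 \\ \beta(a) & 0 & \gamma(a)\end{bmatrix}$; writing out that its cube vanishes gives in turn $\beta(a)=0$, then $\alpha(a)^2=0$ hence $\alpha(a)=0$, then $\gamma(a)^2=0$ hence $\gamma(a)=0$ (all valid over any skew field). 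So it suffices to prove $C_1=0$ and $L_1=0$, that is $\K_\sharp=\K$.

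The third and last stage, the vanishing of $C_1$ and $L_1$, is the genuine obstacle. The difficulty is that, unlike $\calW_1$ and $\calW_2$, none of these reductions and no flag of $\K^n$ is adapted to the matrices $J_a$, precisely because the corner entry $a$ at the $(1,n)$-spot destroys block-triangularity, so the vanishing must be extracted from explicit powers. I would imitate the proofs of Claims~\ref{anafetg1}, \ref{anafetg2} and \ref{phipsiclaim}: for well-chosen $L$, $C$, $U$ — taken with as few nonzero entries as possible, to confine the computation to a small corner — form $M := J_a + A_L + B_C + E_U \in \calV$ and use $M^n=0$, in practice by computing $M^2$ and $M^3$ on $e_1$ and on $e_n$ and reading relations off the nilpotency of the resulting scalar, $2\times 2$ or $3\times 3$ expressions, so as to peel off one coordinate at a time and bring the situation down inductively (on $n$, or on the number of nonzero entries of $C_1(a)$) to the cases $n=3$ and $\#\K=2$; those are then settled by a direct computation with determinant identities of the form $0=\det(J_a + xA_L + yB_C + zE_U)$ over the relevant two-element field, exactly in the spirit of the last bullet in the proof of Claim~\ref{phipsiclaim}. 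I expect essentially all of the remaining work, and the only delicate case distinctions, to sit in this one step, since it is what unlocks Stages~1 and~2.
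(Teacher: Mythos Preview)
Your Stages~1 and~2 are correct and rather elegant (one small slip: from $N^3=0$ you get $\alpha(a)^3=0$, not $\alpha(a)^2=0$, but over a skew field this still forces $\alpha(a)=0$). The real problem is Stage~3: you correctly identify that everything reduces to proving $C_1=0$ and $L_1=0$, but you do not prove it. Your proposed plan --- compute powers of $J_a+A_L+B_C+E_U$ and hope to peel off coordinates --- is vague, and there is no indication that the resulting relations untangle cleanly enough to force $L_1(a)=0$; the presence of the unknown scalars $\alpha(a),\beta(a),\gamma(a)$ and the lower-triangular block $T(a)$ pollutes every entry of $M^2$ and $M^3$, so this is a genuine gap rather than a routine computation left to the reader.

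The paper avoids Stage~3 entirely by a short and quite different idea. For $\#\K>2$, fix $a\in\K$, pick any nonzero $C_0\in\Mat_{n-2,1}(\K)$ and any $x\in\K$, and set $X:=\begin{bmatrix} x \\ C_0 \\ 1\end{bmatrix}$. Since every element of $\calV$ is nilpotent, $\calV X\cap X\K=\{0\}$, so $\dim_{\K_0}\calV X\le (n-1)q$. But the already simplified matrices $A_L$ and $B_C$ satisfy $A_LX=\begin{bmatrix} LC_0 \\ 0 \\ 0\end{bmatrix}$ and $B_CX=\begin{bmatrix} 0 \\ C \\ 0\end{bmatrix}$, so $\K^{n-1}\times\{0\}\subset\calV X$; equality of dimensions forces $\calV X=\K^{n-1}\times\{0\}$. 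In particular the last entry of $J_aX$ vanishes, i.e.\ $\beta(a)x+L_1(a)C_0+\gamma(a)=0$ for all $x\in\K$ and all nonzero $C_0$. Varying $x$ gives $\beta(a)=0$ and $L_1(a)C_0=-\gamma(a)$; rescaling $C_0$ (this is where $\#\K>2$ is used) gives $\gamma(a)=0$ and then $L_1(a)=0$. A symmetric argument with $\calV^TY$ for $Y=\begin{bmatrix}1\\ C_0\\ 0\end{bmatrix}$ yields $\alpha(a)=0$ and $C_1(a)=0$. Finally $T(a)=0$ follows exactly as in your Stage~2. The remaining case $\K=\K_0$ (so in particular $\#\K=2$ is covered) is handled by the trace identity of Lemma~\ref{orthogonality}: $\tr(J_1A_L)=\tr(J_1B_C)=\tr(J_1E_U)=0$ immediately gives $L_1(1)=0$, $C_1(1)=0$ and $T(1)$ diagonal (hence zero), and the rest is a short $2\times2$/$3\times3$ computation. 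This is the missing idea you need to close your argument.
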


We have to distinguish between two cases, the main problem being the handling of fields with two elements.

\subsubsection{Proof of Claim \ref{lastclaim}: the case $\K=\K_0$}

We assume $\K=\K_0$. In particular, $\K$ is commutative, which allows us to use
Lemma \ref{orthogonality} to obtain $\tr(J_1 A_L)=0$, $\tr(J_1B_C)=0$ and $\tr(J_1E_U)=0$ for all
$(L,C,U) \in \Mat_{1,n-2}(\K) \times \Mat_{n-2,1}(\K) \times \NT_{n-2}(\K)$. Therefore, $L_1(1)=0$, $C_1(1)=0$ and $T(1)$ is a diagonal matrix.
Every diagonal entry of $T(1)$ is an eigenvalue of $J_1$, and hence $T(1)=0$.
Then $J_1$ induces an endomorphism of $\Vect_\K(e_1,e_n)$ whose matrix in $(e_1,e_n)$
is $N=\begin{bmatrix}
\alpha(1) & 1 \\
\beta(1) & \gamma(1)
\end{bmatrix}$. This last matrix must be nilpotent, and hence $\alpha(1)=-\gamma(1)$ and $\beta(1)=-\gamma(1)^2$ (as $\tr N=0$ and $\det N=0$).
Choose finally $(L,C)\in \Mat_{1,n-2}(\K) \times \Mat_{n-2,1}(\K)$ such that $LC \neq 0$, and set
$M:=J_1+A_L+B_C$. One checks that $M^3e_1=-\gamma(1)^2 LC \,e_1$, and hence $\gamma(1)=0$. Therefore, the maps
$\alpha$, $\beta$, $\gamma$, $L_1$, $C_1$ and $T$ all vanish on $1$; since they are $\K$-linear, Claim \ref{lastclaim} is proven in the case
$\K=\K_0$.

\subsubsection{Proof of Claim \ref{lastclaim}: the case $\# \K>2$}

We assume here that $\# \K>2$, which holds whenever $\K_0 \subsetneq \K$.

Fix $a \in \K$.
Let $C_0 \in \Mat_{n-2,1}(\K) \setminus \{0\}$. Let $x \in \K$.
We consider the non-zero vector $X:=\begin{bmatrix}
x \\
C_0 \\
1
\end{bmatrix}$ of $\K^n$. The $\K_0$-vector space $\calV X$ must intersect $X\,\K$ trivially as all the elements of $\calV$ are nilpotent.
Thus $\dim_{\K_0} \calV X \leq (n-1)q$.
However, for every $(L,C) \in \Mat_{1,n-2}(\K) \times \Mat_{n-2,1}(\K)$, we have
$$A_L X=\begin{bmatrix}
LC_0 \\
0 \\
0
\end{bmatrix} \quad \text{and} \quad B_C X=\begin{bmatrix}
0 \\
C \\
0
\end{bmatrix}$$
Varying $L$ and $C$ then yields the inclusion $\K^{n-1} \times \{0\} \subset
\calV X$. Since $\dim_{\K_0} \calV X \leq (n-1)q=\dim_{\K_0} (\K^{n-1} \times \{0\})$, we deduce that $\calV X=\K^{n-1} \times \{0\}$.
However, the last entry of $J_a X$ is $\beta(a) x+L_1(a)C_0+\gamma(a)$, and therefore:
$$\forall x\in \K, \; \; \beta(a) x+L_1(a)C_0+\gamma(a)=0.$$
We deduce that $L_1(a)C_0+\gamma(a)=0$ and $\beta(a)=0$, which yields:
$$\forall C \in \Mat_{n-2,1}(\K) \setminus \{0\}, \; \forall y \in \K \setminus \{0\}, \; L_1(a)Cy+\gamma(a)=0.$$
As $\# \K>2$, we deduce that $\gamma(a)=0$ and
$$\forall C \in \Mat_{n-2,1}(\K) \setminus \{0\}, \; L_1(a)C=0.$$
Varying $C$ then yields $L_1(a)=0$.

Let again $C_0 \in \Mat_{1,n-2}(\K) \setminus \{0\}$, and set $Y:=\begin{bmatrix}
1 \\
C_0 \\
0
\end{bmatrix}$.
For every $(L,C) \in \Mat_{1,n-2}(\K) \times \Mat_{n-2,1}(\K)$, we have
$$A_L^T Y=\begin{bmatrix}
0 \\
L^T \\
0
\end{bmatrix} \quad \text{and} \quad B_C^T X=\begin{bmatrix}
0 \\
0 \\
C^TC_0
\end{bmatrix}.$$
As above, varying $C$ and $L$ yields $\calV^T Y=\{0\} \times \K^{n-1}$.
The first entry of $J_a^T Y$ is $\alpha(a)+C_1(a)^TC_0$ and it must be $0$.
Again, varying $C_0$ yields both $\alpha(a)=0$ and $C_1(a)=0$.

Let $U \in \NT_{n-2}(\K)$.
For every $t \in \K_0$, the matrix $E_U+tJ_a$ is nilpotent and stabilizes the $\K$-vector space $\Vect_\K(e_2,\dots,e_{n-1})$,
with an induced endomorphism represented in $(e_2,\dots,e_{n-1})$ by $U+t\,T(a)$. It follows that $\NT_{n-2}(\K)+\K_0 T(a)$
is a nilpotent $\K_0$-linear subspace of $\Mat_{n-2}(\K)$. If $T(a) \neq 0$, then we have a contradiction with
point (a) of Theorem \ref{Gerstenhaberskew}. Therefore $T(a)=0$, and Claim \ref{lastclaim} is proven.

\subsection{Conclusion}

We have shown that, for every list $(L,C,U,a) \in \Mat_{1,n-2}(\K) \times \Mat_{n-2,1}(\K) \times \NT_{n-2}(\K) \times \K$, the additive group
$\calV$ contains all four matrices
$$\begin{bmatrix}
0 & L & 0 \\
0 & 0 & 0 \\
0 & 0 & 0
\end{bmatrix}, \quad
\begin{bmatrix}
0 & 0 & 0 \\
0 & 0 & C \\
0 & 0 & 0
\end{bmatrix}, \quad
\begin{bmatrix}
0 & 0 & 0 \\
0 & U & 0 \\
0 & 0 & 0
\end{bmatrix} \quad \text{and} \quad
\begin{bmatrix}
0 & 0 & a \\
0 & 0 & 0 \\
0 & 0 & 0
\end{bmatrix}.$$
It follows that $\calV$ contains $\NT_n(\K)$. As $\dim_{\K_0} \calV=q\,\binom{n}{2}=\dim_{\K_0} \NT_n(\K)$, we conclude that
$\calV=\NT_n(\K)$. This completes our proof of point (b) of Theorem \ref{Gerstenhaberskew}.

\section*{Acknowledgement}

The author would like to thank Alexander Guterman for his outstanding effort in helping him understand the essence of Serezhkin's proof.

\end{document}